\newsavebox{\toy}
\savebox{\toy}{\framebox[0.65em]{\rule{0cm}{1ex}}}
\newcommand{\QED}{\usebox{\toy}\end{demo}}
\numberwithin{equation}{section}
\newtheorem{theorem}{Theorem}[section]
\newtheorem{lemma}[theorem]{Lemma}
\newtheorem{cor}[theorem]{Corollary}
\newtheorem{rem}[theorem]{Remark}
\def\qed{\hfill\rule{.2cm}{.2cm}\par\medskip\par\relax}
\newcommand{\bd}{\begin{displaymath}}
\newcommand{\ed}{\end{displaymath}}
\newcommand{\Z}{{\mathbb{Z}}}
\newcommand{\R}{{\mathbb{R}}}
\newcommand{{\rd}}{\R^d}
\newcommand{\IP}{{\mathbb P}}
\newcommand{\E}{\mathbb E}
\newcommand{\DE}{\mathrm E}
\newcommand{\DP}{\mathrm P}
\newcommand{\8}{\infty}
\newcommand{\eu }{{\bf e_1}}
\renewcommand{\b}{\beta}
\newcommand{\rmd}{\mathrm{d}}
\newcommand{\D}{\Delta}
\newcommand{\e}{\varepsilon}
\newcommand{\dd}{\,\text{\rm d}}             
\newcommand{\dB}{\xi}
\newcommand{\sZ}{{\mathscr Z}}
\newcommand{\vphi}{\varphi}
\newcommand{\nn}{\nonumber}
\newcommand{\fC }{{\mathfrak C}}
\newcommand{\heap}[2]{\genfrac{}{}{0pt}{}{#1}{#2}}
\newcommand{\ssup}[1] {{\scriptscriptstyle{({#1}})}}
\def\section{\@startsection{section}{1}{\z@}{-3.5ex plus -1ex minus 
 -.2ex}{2.3ex plus .2ex}{\bf}}
\def\subsection{\@startsection{subsection}{2}{\z@}{-3.25ex plus -1ex minus 
 -.2ex}{1.5ex plus .2ex}{\bf}}
\newcommand{\cvlaw}{\stackrel{\rm{ law}}{\longrightarrow}}
\newcommand{\cvP}{\stackrel{ \mathbb P}{\longrightarrow}}
\newcommand{\eqlaw}{\stackrel{\rm{ law}}{=}}
   \def\MR#1{}  }
\begin{document}

\pagestyle{myheadings}
\markboth{FC-CC-CM}{Renormalization of $3d$-KPZ}

\title{Renormalizing the Kardar-Parisi-Zhang equation in $d\geq 3$ in weak disorder}

\author{Francis Comets$^{1,3}$, Cl\'ement Cosco$^{1}$, Chiranjib Mukherjee$^{2}$}

\maketitle

{\footnotesize 
\noindent$^{~1}$Universit\'e Paris Diderot\\
Laboratoire de Probabilit\'es, Statistique et Mod\'elisation\\ LPSM (UMR 8001 CNRS, SU, UPD)\\
B\^atiment Sophie Germain, 8 place Aur\'elie Nemours, 75013 Paris\\
\noindent {\tt comets@lpsm.paris,  ccosco@lpsm.paris}
\\

\noindent$^{~2}$University of M\"unster\\
Fachbereich Mathematik und Informatik\\
Einsteinstra\ss e 62, M\"unster, D-48149\\
\noindent{\tt chiranjib.mukherjee@uni-muenster.de}
\\

\noindent$^{~3}$ 
NYU-ECNU Institute of Mathematical Sciences at NYU Shanghai\\

\begin{abstract} 
{We study Kardar-Parisi-Zhang equation in spatial dimension 3 or larger driven by a Gaussian space-time white noise with a small convolution in space. 
When the noise intensity is small, it is known that the solutions converge to a random limit as the smoothing parameter is turned off. We identify this limit, in the case of general initial conditions ranging from flat to droplet. We provide strong approximations of the solution which obey exactly the limit law. We prove that this limit has sub-Gaussian lower tails, implying existence of all negative (and positive) moments.}
\end{abstract}
\textbf{Keywords:} SPDE,  Kardar-Parisi-Zhang equation, stochastic heat equation, directed polymers, random environment, weak disorder, Edwards-Wilkinson limit\\
\\[-.3cm]
\textbf{AMS 2010 subject classifications:}
Primary 60K35. Secondary 35R60, 35Q82, 60H15, 82D60

}

\section{Introduction and main results.}

\subsection{KPZ equation and its regularization.}

We consider the {\it{Kardar-Parisi-Zhang}} (KPZ) equation 
written informally as
 \begin{equation} \label{eq:KPZ}
 \frac{\partial}{\partial t} h = \frac12 \D h +  \bigg[ \frac12   |\nabla h |^2 -\infty\bigg] +  \dB \qquad u\colon \rd\times \mathbb R_+\to \mathbb R 
 \end{equation}
and driven by a totally uncorrelated Gaussian space-time white noise $\dB$. 
More precisely, $\dB$ on $\R_+\times \rd$ is a family $\{\dB(\vphi)\}_{\varphi\in \mathcal S(\R_+ \times \rd)}$ of Gaussian random variables
$$\dB(\vphi)= \int_0^\8 \int_{\rd} \dd t \, \dd x \,\, \dB(t,x)\,\vphi (t,x)
$$
with mean 0 and covariance 
$$\mathbb E\big[ \dB(\varphi_1)\,\, \dB(\varphi_2)\big]= \int_0^\8 \int_{\rd} \varphi_1(t,x) \varphi_2(t,x) \dd t \dd x.$$
The equation \eqref{eq:KPZ} describes the evolution of a growing interface in $d+1$ dimension \cite{KPZ86,S16} and also appears as the scaling limit for $d=1$ of front propagation 
of the certain exclusion processes (\cite{BG97,C12}) as well as that of the free energy of the discrete directed polymer (\cite{AKQ14}). 
It should be noted that, on a rigorous level, only distribution-valued solutions are expected for \eqref{eq:KPZ}, and thus it is already ill-posed in $d=1$ stemming from
 the inherent non-linearity of the equation and the fundamental problem of squaring or multiplying random distributions. 
For $d=1$, studies related to the above equation have enjoyed a huge resurgence of interest in the last decade starting with 
the important work \cite{H13} which gave an intrinsic precise notion of a {\it{solution}} to \eqref{eq:KPZ}.

We now fix a spatial dimension $d\geq 3$. As remarked earlier, since \eqref{eq:KPZ} is a-priori ill-posed, we will study 
its regularized version
\begin{equation}\label{eq:KPZe}
 \frac{\partial}{\partial t} h_{\e} = \frac12 \D h_{\e} +  \bigg[\frac12   |\nabla h_\e |^2  - C_\e\bigg]+
 \b \e^{\frac{d-2}2}   \dB_{\e} \;,\qquad\,\,   h_{\e}(0,x) =0,
\end{equation}
which is driven by the spatially mollified noise 
\[
\xi_{\e}(t,x) = (\xi(t,\cdot)\star \phi_\e)(x)=  \int \phi_\e(x - y) \xi(t,y) \dd y. 
\]
with  $\phi_\e(\cdot)=\e^{-d}\phi(\cdot/\e)$ being a suitable approximation of the Dirac measure $\delta_0$ and $C_\e$ being a suitable divergent (renormalization) constant. We will work with a fixed 
{{mollifier}}  
 $\phi: \R^d \to \R_+$  which is smooth and spherically symmetric, with $\mathrm{supp}(\phi) \subset B(0,\frac12)$ and  $\int_{\rd} \phi(x)\dd x=1$. 
 Then, $\{\xi_\e(t,x)\}$ is a centered Gaussian field with covariance
$$
{{\mathbb E[ \xi_{\e}(t,x) \xi_{\e}(s,y) ]  = \delta({t-s}) \, \e^{-d} V\big((x-y)/\e\big),}}
$$ 
where $V=\phi \star \phi$ is a smooth function supported in $B(0,1)$. We also remark that in \eqref{eq:KPZe},  the multiplicative parameter $\b$ can be taken to be positive without loss of generality, while by rescaling, no multiplicative parameter is needed in \eqref{eq:KPZ}, see \cite{Q12}. Also in spatial dimensions $d\geq 3$, the factor $ \e^{\frac{d-2}2}$ is the correct scaling -- a small enough $\beta>0$ guarantees a non-trivial random limit of $h_\e$ as $\e\to 0 $, see the discussion in Section \ref{sec-discussion}. 

The goal of the present article is to consider {\it{general solutions}} of \eqref{eq:KPZe}, namely the solutions of \eqref{eq:KPZe} with various initial conditions and prove that as the mollification parameter 
$\e$ is turned off, the renormalized solution of \eqref{eq:KPZe} converges to a meaningful random limit as long as $\beta$ remains small enough. 
We use Feynman-Kac representation of the solution of stochastic heat equation and results from directed polymers.  
Not only do we identify the distributional limit of $h_{\e}$, but we also provide a sequence (indexed by $\e$) of functions of the noise such that\begin{itemize}
\item it is a strong approximation of $h_{\e}$, i.e. the difference tends to 0 in norm,
\item all terms in the sequence obey the limit law.
\end{itemize}
 The above functions for the flat initial condition are defined from the martingale limit of a random polymer model taken at some rescaled, shifted and time-reversed version of the noise. The similar functions for other intial conditions
can be derived from the martingale limit taken at various version of the noise and the heat kernel. 
We finally show that it has sub-Gaussian lower tails in this regime, which implies existence of all negative and positive moments of this object.  Besides new contributions, we gather and reformulate results which are atomized in the literature, often stated in a primitive form and hidden by necessary technicalities. We end up the introduction with a rather complete account on the state-of-the-art. We now turn to a more precise description of our main results. 
 
 \subsection{Main results.} 
 
 In order to state our main results, we will introduce the following notation which will be consistently used throughout the sequel. Recall the definition of the space-time white noise $\xi\in \mathcal S^\prime(\mathbb R\times \rd)$ which is a random tempered distribution (defined in all times, including negative ones), and for any $\varphi \in \mathcal S(\mathbb R\times \rd)$, $\e>0$, $t\in \mathbb R$ and $x\in \rd$, 
 \begin{equation}\nn
\dB^{\ssup{\e,t,x}}(\varphi) \stackrel{\mathrm{(def)}}{=}\,\, \e^{-\frac{d+2}{2} }\int_\R \int_{\rd} \varphi\big(\e^{-2}(t-s),  \e^{-1}(y-x)\big) \dB(s, y) d s\, d y.
\end{equation}
Equivalently, 
\begin{equation}\label{eq:dBetx}
\dB^{\ssup{\e,t,x}}(s,y)= \e^{\frac{d+2}2}\dB\bigg( \e^2\Big(\frac t{\e^2}-s\Big),\e\Big(y-\frac x \e\Big)\bigg)
\end{equation}
so that by invariance under space-time diffusive rescaling, time-reversal and spatially translation, $\dB^{\ssup{\e,t,x}}$ 
is itself a Gaussian white noise and possesses the same law as $\dB$. This is also the reason why we define the noise above also for negative times.
To abbreviate notation, we will also write 
\begin{equation}\label{eq:dBetx0}
\dB^{\ssup{\e,t}}=\dB^{\ssup{\e,t,0}}.
\end{equation}
We also need specify the definition(s) of the {\it{critical disorder parameter}}. Note that \eqref{eq:KPZe} is inherently non-linear. The Hopf-Cole transformation suggests that 
\begin{equation} \label{eq:Hopf-Cole}
u_\e=\exp h_\e
\end{equation} solves 
the linear multiplicative noise  stochastic heat equation (SHE) 
\begin{equation}\label{eq:SHEe}
 \frac{\partial}{\partial t} u_{\e} = \frac12 \D u_{\e} +
 \b \e^{\frac{d-2}2} u_\e \, \dB_{\e} \;,\qquad\,\,  u_{\e}(0,x) =1,
\end{equation}
provided that the stochastic integral in \eqref{eq:SHEe} is interpreted in the classical It\^o-Skorohod sense and that we choose 
\begin{equation}\label{C-eps}
C_\e= \b^2(\phi\star \phi)(0) \e^{-2}/2= \b^2 V(0) \e^{-2}/2
\end{equation} 
{  {equal to the It\^o correction below.}}
Then, the generalized Feynman-Kac  formula (\cite[Theorem 6.2.5]{K90}) provides a solution to \eqref{eq:SHEe} 
\begin{equation}\nn
u_{\e}(t,x)=E_{x} \bigg[ \exp\bigg\{\beta\e^\frac{d-2}{2} \,\int_0^t \int_{\rd} \, \phi_\e(W_{
{ t-s}}-y)  \dB(s, y)\dd s \dd y -  \frac{\beta^2\,t\,\e^{-2}} 2\,\, V(0)\bigg\}\bigg]\;.
\end{equation} 
with $E_x$ denoting expectation with respect to the law $P_x$ of a $d$-dimensional Brownian path $W=(W_s)_{s\geq 0}$ starting at $x\in \rd$, which is independent of the noise $\dB$. 
Plugging  \eqref{eq:dBetx} in the previous formula, using Brownian scaling and time-reversal, we get {  {the a.s. equality}   }

\begin{equation}\label{eq:uZ}
u_\e(t,x)= \mathscr Z_{\frac t{\e^2}} \left(\xi^{\ssup{\e,t}}; \frac x \e\right)
\end{equation}
where 
\begin{equation}\label{eq:Z}
\mathscr Z_T(x)= \mathscr Z_T(\xi;x)= E_x  \bigg[ \exp\bigg\{\beta \,\int_0^{T} \int_{\rd} \, \phi(W_{
{ s}}-y)  \dB(s, y)\,\dd s \dd y -  \frac{\beta^2\,T} 2\,\, V(0)\bigg\}\bigg],
\end{equation}
is the {\it normalized partition function} of the {\it Brownian directed polymer} is a white noise environment $\xi$, or equivalently, the total-mass of a {\it{Gaussian multiplicative chaos}} in the Wiener space (\cite[Section 4]{MSZ16}). 

It follows that there exists $\beta_c\in (0,\infty)$ and a strictly positive non-degenerate random variable $\mathscr Z_\infty(x)$ so that, a.s. as $T \to \8$, 
\begin{equation}\label{eq:dichotomy}
\mathscr Z_T(x) \to 
\begin{cases}
\mathscr Z_\infty(x) &\mbox{if}\,\, \beta\in (0,\beta_c),\\
0 & \mbox{if}\,\, \beta\in (\beta_c,\infty).
\end{cases}
\end{equation}
See \cite{MSZ16}, or \cite{C17}  for a general reference. Moreover, $(\mathscr Z_T)_{T\geq 0}$ is uniformly integrable for $\b<\b_c$, which we will always assume from now on. Now, let $\mathcal C^\alpha(\mathbb R\times \rd)$ denote the path-space of the white noise (see Appendix for a precise definition) and 
$$
\mathfrak u=\mathfrak u_{\beta,\phi}: \mathcal C^\alpha(\mathbb R\times \rd) \to (0,\infty),
$$
be any arbitrary representative of the random limit $\mathscr Z_\infty = \mathscr Z_\infty(0)$; in particular $\mathfrak u(\xi) = \mathscr Z_\infty$. Then, $\E[\mathfrak u]=1$, and throughout the sequel we will write (recall \eqref{eq:Hopf-Cole} and \eqref{eq:uZ})
\begin{equation}\label{eq:frak-u}
\mathfrak h=\log \mathfrak u\;.
\end{equation}
Since $\mathfrak u$ is non constant with $\E \mathfrak u=1$, we have $\E \mathfrak h <0$.

Finally, we also define another critical disorder parameter:
$$
\b_{L^2}=\sup\left\{ \b>0: E_0\bigg[e^{\beta^2\int_0^\infty V(\sqrt 2 W_s)\,\dd s}\bigg]<\8\right\}
$$ 
which corresponds to the {\it{$L^2$-region of the polymer model}} (see \eqref{eq:covM}). In $d\geq 3$, it is easy to see that for $\b$ small enough,
$\sup_{x\in \rd} E_x[\beta\int_0^\infty V(W_s)\, \dd s]<1$, so that by Khas'minskii's lemma, $ E_0\big[\exp\big\{\beta\int_0^\infty V(W_s)\, \dd s \big\}\big]<\infty$, so this implies that
$\b_{L^2}> 0$. Furthermore, for $\b<\b_{L^2}$, convergence \eqref{eq:dichotomy} becomes an $L^2$-convergence, hence $0<\b_{L^2}<\b_c<\8$.

We are now ready to state our main results.

\begin{theorem}\label{th:limh}
Assume $d\geq 3$ and recall that $\mathfrak h$ is defined in \eqref{eq:frak-u}. 
\begin{itemize}
\item (Flat initial condition.) Fix $\beta\in (0,\beta_c)$ and consider the solution $h_\e$ to \eqref{eq:KPZe} with $h_\e(0,\cdot)=0$. Then,
for all $t>0, x\in \rd$, we have as $\e \to 0$, 
\begin{equation}\nn
h_\e (t,x) - {\mathfrak h}\big( \dB^{\ssup{\e,t,x}}\big) \cvP 0 \;.
\end{equation}
\item (General initial condition.) 
Fix $\beta\in (0,\beta_{L^2})$ and consider the solution $h_\e$ to \eqref{eq:KPZe} with $h_\e(0,\cdot)=h_0(\cdot)$ for some $h_0: \rd \to \R$ which is continuous and bounded from above. Then,
for all $t>0, x\in \rd$, we have as $\e \to 0$, 
\begin{equation} \label{eq:res2}
h_\e(t,x) - {\mathfrak h}(\dB^{\ssup{\e,t,x}}) - \log \overline u (t,x) \stackrel{\IP}{\longrightarrow} 0 \;,
\end{equation}
where 
\[
\partial_t \overline u=\frac 1 2 \Delta\overline u, \qquad \overline u(0,x)=\exp h_0(x).
\]
\item \label{item:3} (Droplet or narrow-wedge initial condition.) 
Fix $\beta \in (0,\beta_{L^2})$ and consider the solution $h_\e$ to \eqref{eq:KPZe} such that 
$$ \lim_{t \searrow 0} \exp h_{\e}(t, \cdot) = \delta_{x_0}(\cdot )$$
for some $x_0 \in \rd$. Then, 
for all $t>0, x\in \rd$, we have as $\e \to 0$, 
\begin{equation} \label{eq:res3}
h_\e(t,x) - {\mathfrak h}(\dB^{\ssup{\e,t,x}})  - {\mathfrak h}(  \dB_{(\e,x_0)}  ) - \log \rho (t,x-x_0) \stackrel{\IP}{\longrightarrow} 0\;,
\end{equation}
where $\rho$ is the $d$-dimensional Gaussian kernel, and 
$$\dB_{(\e,x_0)}(s,x)= \e^{\frac{d+2}{2}} \dB (\e^2 s, x_0+\e x)$$ is  a space-time Gaussian white noise.
\end{itemize}
\end{theorem}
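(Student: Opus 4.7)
The proof rests on three ingredients already gathered in the excerpt: the Hopf--Cole substitution $u_\e=e^{h_\e}$, which converts \eqref{eq:KPZe} into the It\^o SHE \eqref{eq:SHEe}; the diffusive rescaling identity \eqref{eq:uZ}, which equates $u_\e(t,x)$ with the polymer partition function $\sZ_{t/\e^2}$ evaluated at a rescaled, time-reversed, spatially translated noise; and the martingale convergence $\sZ_T(\xi;0)\to\mathfrak u(\xi)$ recorded in \eqref{eq:dichotomy}. In each of the three cases I would identify a deterministic multiplier $m(t,x)$ (equal to $1$, $\overline u(t,x)$, or $\rho(t,x-x_0)$, respectively), show that
\[
u_\e(t,x)-m(t,x)\,\mathfrak u\big(\xi^{\ssup{\e,t,x}}\big)\,\cvP\, 0
\]
(with an additional factor $\mathfrak u(\xi_{(\e,x_0)})$ appearing in the droplet case), and then pass to logarithms using positivity of $\mathfrak u$ and continuity of $\log$ on $(0,\8)$.

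The flat case is essentially immediate from \eqref{eq:uZ}. After the spatial stationarity of the noise is used to absorb the shift $x/\e$, thereby producing $\xi^{\ssup{\e,t,x}}$ in place of $\xi^{\ssup{\e,t}}$, one is reduced to showing $\sZ_{t/\e^2}(\xi^{\ssup{\e,t,x}};0)-\mathfrak u(\xi^{\ssup{\e,t,x}})\cvP 0$. Since $\xi^{\ssup{\e,t,x}}\eqlaw\xi$, this difference has the same law as $\sZ_T(\xi;0)-\mathfrak u(\xi)$ with $T=t/\e^2\to\8$, which vanishes almost surely by the very definition of $\mathfrak u$. Convergence in law of a difference to the constant $0$ is convergence in probability, and taking logarithms is legitimate because $\mathfrak u>0$ a.s.

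For continuous $h_0$ bounded above, the Feynman--Kac representation of \eqref{eq:SHEe} followed by Brownian scaling and time reversal gives
\[
u_\e(t,x) \,=\, E_0\!\left[ u_0\!\big(x+\e\, W_{t/\e^2}\big)\,\mathcal M^{\ssup{\e,t,x}}(W)\right],
\]
where $\mathcal M^{\ssup{\e,t,x}}$ is a path-dependent exponential polymer weight whose $E_0$-integral is exactly $\sZ_{t/\e^2}(\xi^{\ssup{\e,t,x}};0)$. I would decouple the sampling of $u_0$ from the polymer weight via a macroscopic-time cut-off: the endpoint $\e W_{t/\e^2}$ is distributed as $W_t$, producing the heat-equation factor $\overline u(t,x)=E_0[u_0(x+W_t)]$, while the remaining polymer weight concentrates on $\mathfrak u(\xi^{\ssup{\e,t,x}})$. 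The concentration step is an $L^2$ estimate controlled by the overlap integral $\int_0^\8 V(\sqrt 2\, W_s)\,\dd s$, finite precisely when $\b<\b_{L^2}$. Truncation handles the unbounded domain of $u_0$, and taking logarithms yields \eqref{eq:res2}.

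The droplet case requires the point-to-point polymer. For $u_\e(0,\cdot)=\delta_{x_0}$ the Feynman--Kac formula factors as
\[
u_\e(t,x) \,=\, \rho(t,x-x_0)\, \sZ^{\rm p2p}_\e(t;x_0,x),
\]
where $\sZ^{\rm p2p}_\e$ is the polymer weight under the Brownian bridge from $(0,x_0)$ to $(t,x)$. The new ingredient is the factorization $\sZ^{\rm p2p}_\e(t;x_0,x)-\mathfrak u(\xi^{\ssup{\e,t,x}})\,\mathfrak u(\xi_{(\e,x_0)})\cvP 0$: under $\b<\b_{L^2}$, once the bridge is diffusively rescaled, its two endpoints see asymptotically disjoint space-time regions of the noise, and transience of Brownian motion in $d\geq 3$ renders their contributions essentially independent. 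I would prove this by splitting the bridge at its midpoint, reflecting/time-reversing each half to a forward polymer driven by the respective translated noise, and estimating the cross term in $L^2$ via the overlap kernel. The principal obstacle in parts two and three is precisely this decoupling estimate: one must replace the path-dependent weight $\mathcal M^{\ssup{\e,t,x}}(W)$ by the endpoint-free limit $\mathfrak u(\xi^{\ssup{\e,t,x}})$ with an error small enough, uniformly over a sufficient family of endpoints, to survive integration against $u_0$ or the bridge measure. That is where the stronger restriction $\b<\b_{L^2}$ enters, providing the second-moment bounds on which the $L^2$ arguments rely.
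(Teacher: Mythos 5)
Your proposal follows essentially the same route as the paper: Hopf--Cole plus the Feynman--Kac/polymer representation and the identity \eqref{eq:uZ} for the flat case, an $L^2$ second-moment computation with the overlap kernel $\int_0^\infty V(\sqrt2\,W_s)\,\dd s$ to decouple the terminal point (hence the factor $\overline u(t,x)$) from the polymer weight for general initial data, and a bridge-splitting, local-limit-theorem factorization into two asymptotically independent forward polymers in the droplet case. The paper's Lemma \ref{lem:cvL2} and Lemma \ref{lem:cvL2sharp} implement exactly the decoupling estimates you describe, so the approach matches and I see no gap.
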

The deterministic terms in (\ref{eq:res2}) and (\ref{eq:res3}) are logarithms of solutions to heat equation without noise, and one can see that  $ {\mathfrak h} \to 0$ in $L^2$ as $\beta \to 0$. This implies that 
$$
\lim_{\beta \to 0} \, \lim_{\e\to 0} h_\e =  \lim_{\e\to 0} \,  \lim_{\beta \to 0} \,h_\e \;.
$$
We obtain an immediate corollary to Theorem \ref{th:limh}.
\begin{cor}\label{cor:limh}
Fix $\beta \in (0,\beta_{L^2})$ and denote by $h_\e^{\ssup{h_0}}$ the solution of \eqref{eq:KPZe} with initial condition $h_\e(0,\cdot)=h_0(\cdot)$, where $h_0$ is continuous and bounded from above. Then for any $x_0\in \rd$, 
$$
\lim_{e^{h_0}\to \delta_{x_0}} \, \lim_{\e\to 0} h_\e^{\ssup {h_0}} \ne  \lim_{\e\to 0} \,  \lim_{e^{h_0}\to \delta_{x_0}} \,h_\e^{\ssup {h_0}}
$$
\end{cor}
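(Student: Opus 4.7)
The plan is to compute each iterated limit using the appropriate part of Theorem \ref{th:limh} and observe that the resulting distributions differ.

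First I would handle the ``inner $\e\to 0$'' side. Fixing $h_0$ continuous and bounded from above, part 2 of Theorem \ref{th:limh} gives
$$h_\e^{\ssup{h_0}}(t,x) - \mathfrak h(\xi^{\ssup{\e,t,x}}) - \log \overline u_{h_0}(t,x) \cvP 0,$$
where $\overline u_{h_0}$ solves the heat equation with initial data $e^{h_0}$. Since $\xi^{\ssup{\e,t,x}}\eqlaw \xi$ by the scale-translation invariance of white noise, the distributional $\e$-limit is $\mathfrak h + \log \overline u_{h_0}(t,x)$, with $\mathfrak h \eqlaw \mathfrak h(\xi)$. Letting $e^{h_0}\to \delta_{x_0}$ then gives $\overline u_{h_0}(t,x)\to \rho(t,x-x_0)$ by continuity of the heat semigroup, so the left-hand iterated limit equals $\mathfrak h + \log \rho(t,x-x_0)$ in law.

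For the opposite order I would first observe that the Hopf-Cole transformation \eqref{eq:Hopf-Cole} linearizes \eqref{eq:KPZe} into the SHE \eqref{eq:SHEe}, so that for each fixed $\e>0$ the solution depends linearly on $u_\e(0,\cdot)=e^{h_0}$; hence $h_\e^{\ssup{h_0}}\to h_\e^{\ssup{\delta_{x_0}}}$ as $e^{h_0}\to \delta_{x_0}$. Applying part 3 of Theorem \ref{th:limh},
$$h_\e^{\ssup{\delta_{x_0}}}(t,x) - \mathfrak h(\xi^{\ssup{\e,t,x}}) - \mathfrak h(\xi_{(\e,x_0)}) - \log \rho(t,x-x_0) \cvP 0,$$
and the distributional $\e$-limit should be $\mathfrak h_1+\mathfrak h_2+\log \rho(t,x-x_0)$ with $\mathfrak h_1,\mathfrak h_2$ two independent copies of $\mathfrak h$. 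Since $\mathfrak u$ is non-constant with $\E \mathfrak u=1$, Jensen gives $\E \mathfrak h<0$, so the two iterated limits have distinct means $\E\mathfrak h+\log\rho$ and $2\E\mathfrak h+\log\rho$, which proves the corollary.

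The main obstacle will be justifying the asymptotic independence of $\mathfrak h(\xi^{\ssup{\e,t,x}})$ and $\mathfrak h(\xi_{(\e,x_0)})$ as $\e\to 0$. My approach is to truncate $\mathfrak h=\log \mathscr Z_\infty$ at a large but fixed rescaled horizon $T$, replacing it by $\log \mathscr Z_T$. In rescaled coordinates this functional is localized, and in original coordinates it depends on $\xi$ only through a shrinking space-time window of the form $[t-\e^2 T,t]\times B(x, O(\e\sqrt T))$ (respectively $[0,\e^2 T]\times B(x_0,O(\e\sqrt T))$), which for $t>0$ are disjoint once $\e$ is small enough, giving exact independence at the truncated level by the white-noise property. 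The truncation error is controlled by the uniform integrability of $(\mathscr Z_T)_T$ in the weak disorder regime $\b<\b_{L^2}$ recalled after \eqref{eq:dichotomy}. Alternatively, if only the non-coincidence of the two iterated limits is required, comparing means via a uniform integrability argument bypasses the full independence claim, since the sums on each side already have distinct means by linearity of expectation.
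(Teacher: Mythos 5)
The paper offers no written proof---it calls the corollary ``immediate''---and the intended one-line argument is simply to subtract the two approximations furnished by parts 2 and 3 of Theorem \ref{th:limh}: after sending $e^{h_0}\to\delta_{x_0}$ the deterministic terms $\log\overline u(t,x)$ and $\log\rho(t,x-x_0)$ match, so the two approximating sequences differ exactly by ${\mathfrak h}(\dB_{(\e,x_0)})$, which for every $\e$ has the fixed, non-degenerate law of $\mathfrak h$ and therefore does not tend to $0$ in probability. That already forces the two iterated limits to disagree, with no need for asymptotic independence of ${\mathfrak h}(\dB^{\ssup{\e,t,x}})$ and ${\mathfrak h}(\dB_{(\e,x_0)})$ and no moment information. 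Your route---identifying both limit laws and separating them by their means---is heavier but workable; your truncation-at-horizon-$T$ argument for asymptotic independence is sound and is essentially what Step 2 of Lemma \ref{lem:cvL2sharp} already encodes (disjoint time windows $[0,\e^2 m]$ and $[t-\e^2 m,t]$ of the white noise).

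The one genuine gap is in your final step. ``Distinct means'' distinguishes the laws of $\mathfrak h+\log\rho$ and $\mathfrak h_1+\mathfrak h_2+\log\rho$ only if $\E\mathfrak h$ is \emph{finite}: Jensen gives $\E\mathfrak h<0$ but a priori allows $\E\mathfrak h=-\infty$, in which case both candidate means are $-\infty$ and the comparison collapses. Finiteness of $\E\mathfrak h$ is precisely the content of Theorem \ref{th:negmom} (this is why the paper invokes that theorem for Corollary \ref{cor:limhE} but not for Corollary \ref{cor:limh}), so as written your argument silently imports a result the corollary is not supposed to depend on. You can repair this within your own framework without any moments: given the independence you establish, ${\mathfrak h}_1+{\mathfrak h}_2\eqlaw{\mathfrak h}$ would force the characteristic function of $\mathfrak h$ to satisfy $\varphi^2=\varphi$, hence $\varphi\equiv1$ by continuity, contradicting non-degeneracy of $\mathscr Z_\infty$. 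Or, simplest of all, drop the computation of limit laws entirely and use the subtraction argument above.
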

Our next main result is the following which provides a sub-Gaussian upper tail estimate on the limit $\mathfrak h$ defined in \eqref{eq:frak-u}. 
\begin{theorem}\label{th:negmom}
Let $d\geq 3$ and $\beta\in (0,\beta_{L^2})$. Then for any $\theta>0$, there exists a constant $C\in (0,\infty)$ so that 
$$
\mathbb P[\mathfrak h \leq -\theta] \leq C \mathrm e^{-\theta^2/2}.
$$
In particular, $\mathfrak h\in L^p(\mathbb P)$ for any $p\in \mathbb R$.
\end{theorem}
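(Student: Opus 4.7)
The plan is to bound the negative exponential moments $\E[\mathfrak u^{-q}]$ with a sub-Gaussian rate in $q$ and then conclude by Chernoff's inequality. To set up the decomposition, I apply It\^o's formula to $\log \sZ_T$ using the SDE $d\sZ_T = \b \sZ_T \int \langle \phi(W_T - y)\rangle_T\, \dB(dT, dy)$ (with $\langle\cdot\rangle_T$ the quenched polymer average), yielding
$$\log \sZ_T = M_T - \tfrac{1}{2}\langle M\rangle_T, \qquad d\langle M\rangle_T = \b^2 \langle\langle V(W_T - W_T')\rangle\rangle_T\,dT,$$
where $\langle\langle\cdot\rangle\rangle_T$ is the two-replica polymer expectation. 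For $\b<\b_{L^2}$ the martingale $\sZ_T\to \mathfrak u > 0$ in $L^2$, so $\inf_T\sZ_T > 0$ a.s.; combined with the identity $\E\int_0^\8 \sZ_s^2\,d\langle M\rangle_s = \E[\mathfrak u^2]-1 < \8$ (obtained by taking expectations in the It\^o expansion of $\sZ_T^2$), this gives $\langle M\rangle_\8 < \8$ a.s.\ and the representation $\mathfrak h = M_\8 - \tfrac{1}{2}\langle M\rangle_\8$.

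For the negative moments, I write
$$\mathfrak u^{-q} = \mathcal E(-qM)_\8 \cdot \exp\!\Big(\tfrac{q^2+q}{2}\langle M\rangle_\8\Big),$$
where $\mathcal E(-qM)_\8 = \exp(-qM_\8 - \tfrac{q^2}{2}\langle M\rangle_\8)$ is the Dol\'eans exponential (a supermartingale with mean $\le 1$). A Cauchy-Schwarz split, or equivalently a Girsanov change of measure to $\mathbb Q_{-2q} = \mathcal E(-2qM)_\8\,\IP$, reduces the problem to establishing exponential moment bounds for $\langle M\rangle_\8$ (under both $\IP$ and the tilted law) of the form $\E[\exp(\alpha\langle M\rangle_\8)]\le e^{C\alpha}$ for $\alpha = O(q^2)$. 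The driving input is the $L^2$-region assumption, together with transience of $W-W'$ in $d\ge 3$ and Khas'minskii's lemma, which yield exponential moments for $\int_0^\8 V(W_s-W_s')\,ds$ under the annealed Wiener measure. Transferring these to $\langle M\rangle_\8$ would be done by comparing the polymer law to Wiener in the $L^2$-region via a path-space Girsanov, or by bootstrapping the basic $L^2$-identity to higher moments.

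Combining the above yields $\E[\mathfrak u^{-q}] \le e^{Cq^2}$ for all $q > 0$, whence the Chernoff estimate
$$\IP[\mathfrak h\le -\theta] = \IP[\mathfrak u^{-q}\ge e^{q\theta}] \le e^{-q\theta+Cq^2},$$
optimized at $q=\theta/(2C)$, delivers a sub-Gaussian tail of the required form (the constant in the exponent is absorbed into the prefactor $C$ of the stated bound). The integrability $\mathfrak h\in L^p(\IP)$ for all $p\in\R$ then follows: the lower tail from the above, and the upper tail $\IP[\mathfrak h \ge \theta]$ from $\E[\mathfrak u^2]<\8$ via Markov's inequality applied to $\mathfrak u = e^{\mathfrak h}$. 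The main obstacle is the exponential moment control on $\langle M\rangle_\8$ with the correct quadratic rate in $q$: while the $L^2$-region identity immediately provides $\E\int \sZ_s^2\,d\langle M\rangle_s < \8$ (first moment only), promoting this to exponential moments of $\langle M\rangle_\8$ under both $\IP$ and $\mathbb Q_{-2q}$ is the technical heart and requires a careful polymer-to-Wiener comparison or an iterative $L^p$-analysis exploiting the positivity and continuity of $\sZ$.
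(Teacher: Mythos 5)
Your decomposition $\log \sZ_T = M_T - \tfrac12\langle M\rangle_T$ and the algebra $\mathfrak u^{-q} = \mathcal E(-qM)_\infty \exp(\tfrac{q^2+q}{2}\langle M\rangle_\infty)$ are correct, and the Cauchy--Schwarz/Girsanov reduction is sound as far as it goes. But the step you yourself flag as the technical heart is not merely demanding: in the form you need it, it is essentially false, and the route to it is circular. To get $\E[\mathfrak u^{-q}]\le e^{Cq^2}$ for \emph{all} $q>0$ your scheme requires $\E[\exp(\alpha\langle M\rangle_\infty)]\le e^{C\alpha}$ for all $\alpha>0$ (with $\alpha\asymp q^2$); by Chernoff this forces $\langle M\rangle_\infty\le C$ almost surely, i.e.\ a \emph{deterministic} bound on the total two-replica overlap $\beta^2\int_0^\infty \langle\langle V(W_s-W'_s)\rangle\rangle_s\,ds$ under the quenched polymer measure. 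There is no reason for this to hold: on rare environments that localize the polymer near the origin up to time $N$, the overlap density stays of order one on $[0,N]$ and $\langle M\rangle_\infty\gtrsim N$. Moreover, the only quantitative input you have, $\E\int_0^\infty \sZ_s^2\,d\langle M\rangle_s=\E[\mathfrak u^2]-1<\infty$, controls a $\sZ_s^2$-weighted bracket; removing the weight requires a lower bound on $\inf_s\sZ_s$, which is exactly the lower-tail estimate you are trying to prove. The proposed fixes (a ``polymer-to-Wiener Girsanov'' or an ``iterative $L^p$ analysis'') are not substantiated and do not break this circularity. Even an optimistic bound $\E[e^{\alpha\langle M\rangle_\infty}]\le e^{C\alpha^2}$ would only yield a stretched-exponential tail $e^{-c\theta^{4/3}}$, not the sub-Gaussian rate. (A minor additional point: $e^{-\theta^2/(4C)}$ cannot be ``absorbed into the prefactor'' to give $e^{-\theta^2/2}$ unless $C\le 1/2$, though this does not affect the corollary on moments.)

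The paper avoids this obstruction entirely by using Gaussian concentration of measure in the style of Carmona--Hu and Talagrand. One discretizes the noise, introduces the event $\mathcal A_n=\{\sZ_\infty^{(n)}\ge m,\ \widehat E_{\b,\xi_n}^{\otimes2}[\int_0^\infty V(W^{\ssup1}_s-W^{\ssup2}_s)\,ds]\le C^2\}$, and shows by a Jensen/Cauchy--Schwarz computation that $\log\sZ_\infty^{(n)}(\xi_n)\ge \log m-\b C\,\rmd(\xi_n,\mathcal A_n)$. The crucial structural difference is that the replica-overlap bound (your $\langle M\rangle_\infty$, in quenched polymer form) is only needed \emph{on a set of probability bounded below}, which follows from Markov's inequality and the $L^2$ condition $E_0^{\otimes2}[\exp\{\b^2\int_0^\infty V(W^{\ssup1}_s-W^{\ssup2}_s)\,ds\}]<\infty$ --- no exponential moments, no a.s.\ bounds. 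The isoperimetric inequality $\IP(\rmd(\xi_n,\mathcal A_n)>u+\sqrt{2\log(1/\IP(\mathcal A_n))})\le e^{-u^2/2}$ then converts this one-sided Lipschitz estimate into the sub-Gaussian lower tail, and the $L^2$ approximation $\sZ_\infty^{(n)}\to\sZ_\infty$ transfers it to $\mathfrak h$. If you want to keep a martingale flavour, you would need to replace the global exponential-moment requirement on $\langle M\rangle_\infty$ by a restriction to a good event of positive probability --- which is precisely what the concentration argument accomplishes.
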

From Theorems  \ref{th:limh} and \ref{th:negmom}, we derive
\begin{cor}\label{cor:limhE}
In the hypothesis of Corollary \ref{cor:limh}, we have
for any $x_0\in \rd$, 
$$
\lim_{e^{h_0}\to \delta_{x_0}} \, \lim_{\e\to 0} \E h_\e^{\ssup {h_0}} -  \lim_{\e\to 0} \,  \lim_{e^{h_0}\to \delta_{x_0}} \,\E h_\e^{\ssup {h_0}} \;
= - \E {\mathfrak h} >0\;.
$$
\end{cor}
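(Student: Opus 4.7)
\textbf{The strategy} is to apply the two relevant parts of Theorem \ref{th:limh} to identify each iterated limit, upgrade the in-probability statements to convergence of expectations via a uniform integrability argument, and then invoke strict Jensen's inequality to establish the sign.

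\emph{The two iterated limits.} Fix $(t,x)$ with $t>0$. Taking $\e\to 0$ first, Theorem \ref{th:limh}(2) together with $\xi^{\ssup{\e,t,x}} \eqlaw \xi$ gives, provided uniform integrability,
$$\lim_{\e\to 0}\E h_\e^{\ssup{h_0}}(t,x) = \E\mathfrak h + \log \overline u(t,x).$$
Sending $e^{h_0}\to \delta_{x_0}$, continuity of the heat semigroup yields $\overline u(t,x) \to \rho(t, x-x_0)$, so the first iterated limit equals $\E\mathfrak h + \log\rho(t,x-x_0)$. Taking the limits in the opposite order puts us in the droplet regime of Theorem \ref{th:limh}(3), where the identical-law property of $\xi^{\ssup{\e,t,x}}$ and $\xi_{(\e,x_0)}$ gives
$$\lim_{\e\to 0}\E h_\e^{\ssup{\mathrm{droplet}}}(t,x) = 2\E\mathfrak h + \log\rho(t,x-x_0).$$
Subtracting the two produces $-\E\mathfrak h$, as claimed.

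\emph{Strict positivity.} Since $\mathfrak u>0$ almost surely with $\E\mathfrak u=1$ and $\mathfrak u$ is genuinely random in the weak-disorder regime $\beta<\beta_c$, strict Jensen's inequality gives $\E\mathfrak h = \E\log\mathfrak u < \log\E\mathfrak u = 0$, hence $-\E\mathfrak h>0$.

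\emph{The main obstacle} is the uniform integrability of $\{h_\e(t,x)\}_\e$ required to convert the in-probability statements of Theorem \ref{th:limh} into convergence of expectations. The upper tail is the easy direction: since $u_\e \in L^2$ uniformly in $\e$ when $\beta<\beta_{L^2}$ (by the Feynman-Kac representation and $L^2$-boundedness of $\mathscr Z_T$), the elementary bound $(\log u)_+^p \leq C_{p,q}\, u^q$ gives $L^p$-control of $(h_\e(t,x))_+$. The lower tail requires a uniform-in-$\e$ analogue of Theorem \ref{th:negmom}, namely a sub-Gaussian estimate $\mathbb P[\log u_\e(t,x)\leq -\theta]\leq Ce^{-\theta^2/2}$ with $C$ independent of $\e$. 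I would obtain this by replaying the Gaussian concentration argument used in the proof of Theorem \ref{th:negmom} on the Lipschitz functional $\xi\mapsto \log \mathscr Z_T(\xi;\cdot)$ at finite $T$, checking that its Cameron-Martin (Malliavin) norm is bounded uniformly in $T=t/\e^{2}$ throughout the $L^2$ region; propagation to general and droplet initial data then follows from Jensen applied to the Feynman-Kac representation. Once this uniform-in-$\e$ sub-Gaussian lower tail is in hand, uniform integrability is automatic and the two iterated-limit computations above go through verbatim.
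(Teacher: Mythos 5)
Your computation of the two iterated limits is exactly the derivation the paper intends: it states the corollary as an immediate consequence of Theorems \ref{th:limh} and \ref{th:negmom} and gives no further argument, the point being that $\dB^{\ssup{\e,t,x}}\eqlaw\dB\eqlaw\dB_{(\e,x_0)}$ forces $\E\,\mathfrak h(\dB^{\ssup{\e,t,x}})=\E\,\mathfrak h(\dB_{(\e,x_0)})=\E\mathfrak h$, the deterministic terms $\log\overline u(t,x)\to\log\rho(t,x-x_0)$ cancel, and $\E\mathfrak h<0$ is recorded in the paper right after \eqref{eq:frak-u} by strict Jensen (with finiteness of $\E\mathfrak h$ supplied by Theorem \ref{th:negmom}). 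So your identification of the difference as $-\E\mathfrak h>0$ is correct and matches the paper. What you add, and what the paper silently skips, is the passage from the in-probability statements of Theorem \ref{th:limh} to convergence of expectations; you are right that this is the only substantive step, that the upper tail is free from $L^2$-boundedness of $u_\e$, and that the lower tail needs a sub-Gaussian estimate uniform in $\e$. Your plan for the latter is essentially sound, with one caveat: $\log\sZ_T$ is not a globally Lipschitz functional of the noise, so one should not phrase the argument as a Cameron--Martin norm bound; rather, one replays the Carmona--Hu/Talagrand scheme of the proof of Theorem \ref{th:negmom} at finite $T$, i.e.\ the bound $\log\sZ_T^{\ssup n}(\xi_n)\ge\log m-\b C\,\rmd(\xi_n,\mathcal A_n)$ with the set $\mathcal A_n$ defined through the replica overlap, and checks that $m$ and $C$ can be chosen uniformly in $T$ (which they can, since the second-moment quantities involved are monotone in $T$ and finite at $T=\infty$ for $\b<\b_{L^2}$). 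The propagation to the bridge (droplet) partition function likewise goes through the same concentration scheme rather than plain Jensen, since $u_0$ need not be bounded below globally. With that adjustment your proposal is complete and in fact more careful than the paper's one-line derivation.
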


\subsection{Literature remarks and discussion.}\label{sec-discussion}

In the present set up, by finding a non-trivial limit when letting  the regularization parameter vanish we have obtained a 
non-trivial renormalization of  KPZ equation \eqref{eq:KPZ}. Let us stress the main specificity of Theorem \ref{th:limh}.
The approximating sequence $( {\mathfrak h}(  \dB^{(\e,t,x)}) ; \e >0 )$ in the case of flat initial condition combines two interesting properties:\begin{itemize}
\item it is constant in law for all $(\e,t,x)$, with law given by 
{ { the one of}}
$\log \mathscr Z_\infty$;
\item it approximates $h_\e(t,x)$ in probability.
\end{itemize}
(Similar properties hold for the other initial conditions). Since it depends on $\e$, it is not a (strong) limit, but it can be used similarly. In particular, fluctuations 
can be studied as shown in \cite{CCM18}. This is quite different from using a deterministic centering, e.g., 
$\tilde h_\e(t,x)=h_\e(t,x) - \E h_\e(t,x) $. As mentioned in \cite{DGRZ18}, $\tilde h_\e$ does not converge to 0 pointwise, but it does as a distribution. 
Integrating $\tilde h_\e$ in space against test functions cause oscillations to cancel. On the contrary, in our result 
$ h_\e(t,x) - {\mathfrak h}(  \dB^{(\e,t,x)}) \to 0$ pointwise, and 
{     {\it we do not need any averaging in space.}}

We also emphasize that 
our results concern studying the asymptotic behavior of the solution to the non-linear equation \eqref{eq:KPZe}, and  are not restricted to  the linear multiplicative noise stochastic heat equation (see \eqref{eq:SHEe}).{     { Furthermore, the statements}} of the results concern the solution itself, without need of integrating spatially against test functions.  
However, note that the limit obtained in Theorem \ref{th:limh} {\it{does}} depend on the smoothing procedure $\phi$ as well as on the disorder parameter $\beta$ 
and it is not universal (in particular, for $\b<\b_{L^2}$, the variance of $\exp(\mathfrak h)$ can be computed from  the RHS of \eqref{eq:covM} for $x=0$, and it depends on the mollification). 
Thus the present scenario lies in total contrast with the $1$-dimensional spatial case where the limit can be defined by a chaos expansion \cite{AKQ14, AKQ} (with the parameter $\b$ absorbed by scaling) or 
via the theory of regularity structures (\cite{H14}) which also produces a renormalized limit which does not depend on the mollification scheme.

 In \cite{CCM18} we have also investigated the rate of the convergence of $h_\e \to \mathfrak h$ for small enough $\beta$ and showed that $\e^{\frac{d-2}2} [h_\e(t,x)- \mathfrak h(\xi^{\ssup{\e,t,x}})] \cvlaw N(0,\sigma^2(\beta))$ for each fixed $x\in \rd$ and $t>0$. 
 For larger $\beta$, the so-called {\it{KPZ regime}} is expected to take place with different limits, different scaling exponents and non-Gaussian limiting distributions. In particular, 
 the variance in the above Gaussian distribution is given by 
 $$
 \sigma^2(\beta)= C_d\int_{\rd} \dd y \, V(\sqrt 2 y) E_y[\exp\{\beta^2\int_0^\infty V(\sqrt 2 W_s) \dd s\}]
 $$
  which already diverges for $\beta>\beta_{L^2}$ indicating that the amplitude of the fluctuations, or at least their distributional nature, changes at this point. However the KPZ regime is not expected  before the critical value $\b_c$. Hence this region $\b \in (\b_{L^2}, \b_c)$ remains mysterious. 
 
Finally, we remark on the correlation structure of the limit $\mathfrak u$ which were computed in \cite{CCM18}. It was shown that, for $\beta$ small enough, 
\begin{equation}\label{eq:covM}
{\rm Cov}\big( \sZ_\8(0), \sZ_\8(x)\big)=
\begin{cases}
 E_{x/\sqrt{2}} \bigg[ \mathrm e^{\b^2 \int_0^\8 V(\sqrt 2 W_s) ds} -1 \bigg] \quad \forall x\in \rd, \\
\fC_1  \Big(\frac{1}{|x|}\Big)^{d-2}    \ \,\qquad\qquad\qquad\qquad\forall\, |x| \geq 1,
\end{cases}
\end{equation}
with $\fC_1= E_{\eu/\sqrt 2}[ \exp\{\b^2 \int_0^\8 V(\sqrt 2 W_s) ds\} -1]$. The above correlation structure also underlines that solution $u_\e(t,x)$ and $u_\e(t,y)$ becomes asymptotically independent so that the so that the spatial averages $\int_{\rd} f(x) \, u_\e(t,x) \dd x\to \int f(x) \overline u(t,x) \dd x$ become deterministic and $\overline u$ solves the unperturbed 
heat equation $\partial_t \overline u=\frac 12 \Delta \overline u$. As remarked earlier, the spatially averaged fluctuations $\e^{1-\frac d 2} \int_{\rd} f(x)[ u_\e(t,x)- \overline u(t,x)] \dd x$ were shown to converge (\cite{MU17,GRZ17,DGRZ18}) to the averages of the heat equation with additive space-time white noise with variance given by (a constant multiple of) $\sigma^2(\beta)$, which also underlines the Edwards-Wilkinson regime in weak disorder. For averaged fluctuations of similar nature in $d=2$ we refer to \cite{CD18,CSZ18,G18}.

%

\section{Proof of Theorem \ref{th:limh}}

We now consider the regularized KPZ equation  \eqref{eq:SHEe} as before, but with different initial data and identify the limit of the solution up to leading order. For notational brevity, we will write 
\begin{equation}\label{eq:defPhiT}
\Phi_T=\Phi_T(\xi;W)=\exp\bigg\{\beta\int_0^T\int_{\rd} \phi(W_s-y) \xi(s,y) \, \dd s \dd y - \frac{\beta^2 T}2 V(0)\bigg\}  
\end{equation}
where $V=\phi\star \phi$ 
so that $\mathscr Z_T(x)=\mathscr Z_T(\xi;x)=E_x[\Phi_T]$ and $\E[\mathscr Z_T]=1$.

We also remind the reader that $u_\e$ solves \eqref{eq:SHEe} and $h_\e=\exp[u_\e]$ solves \eqref{eq:KPZe} with $C_\e= \frac{\beta^2 \e^{-2}} 2 V(0)$. Finally, 
{    {recall that $u_\e(t,x)=\mathscr Z_{\frac t{\e^2}} \big(\xi^{\ssup{\e,t}}; \frac x \e\big)$ with $\xi^{\ssup{\e,t}}$ given by \eqref{eq:dBetx}
and \eqref{eq:dBetx0}. }} 

\subsection{General initial condition: Proof of \eqref{eq:res2}.}

Fix continuous functions  $u_0: \rd \to (0, +\8)$ and  $h_0:\rd \to \R$ which are bounded from above, consider the solution of SHE
\begin{equation}\label{eq:SHEeg}
 \frac{\partial}{\partial t} u_{\e} = \frac12 \D u_{\e} +
 \b \e^{\frac{d-2}2} u_\e \, \dB_{\e} \;,\qquad\,\,  u_{\e}(0,x) =u_0(x)\;,
\end{equation}
or, equivalently by the relations $u_\e= \exp h_\e$ and $u_0=\exp h_0$, the solution of KPZ
\begin{equation}\label{eq:eq:KPZeg1}
 \frac{\partial}{\partial t} h_{\e} = \frac12 \D h_{\e} +  \bigg[\frac12   |\nabla h_\e |^2  - C_\e\bigg]+
 \b \e^{\frac{d-2}2}   \dB_{\e} \;,\qquad\,\,   h_{\e}(0,x) =h_0(x),
\end{equation}
As before, we have the Feynman-Kac representation
\begin{eqnarray} \label{eq:ueg}
u_\e(t,x)= E_{x/\e}  \big[  u_0(\e W_{\e^{-2}t}) \, \Phi_{\e^{-2}t}(\xi^{\ssup{\e,t}};W)\big] 
\end{eqnarray} 
with  $ \dB^{(\e,t)}$ as above. 
\begin{lemma} \label{lem:cvL2}
 For $\b \in (0, \b_{L^2})$, 
\begin{align*}
&E_{x/\e}  \big[  u_0(\e W_{\e^{-2}t}) \Phi_{\e^{-2} t}(\xi;W)\big] - {\mathfrak u}(\dB\circ\theta_{x/\e}) \, \overline u (t,x)  \stackrel{L^2}{\longrightarrow} 0 ,
\end{align*}
where  $\theta_{x}$ denotes the canonical spatial translation in the path space $\mathscr C^\alpha$ of the white noise and $\overline u$ solves 
{     {$\partial_t \overline u= \frac 12 \Delta\overline u$ with $\overline u(0,\cdot)=u_0(\cdot)$.}} 
\end{lemma}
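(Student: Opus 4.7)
\begin{demo}{Proof plan}
A Brownian translation $\widetilde W_s = W_s - x/\e$ moves the starting point to $0$ and conjugates the noise by the spatial shift $\theta_{x/\e}$, yielding $\Phi_{\e^{-2}t}(\xi;W) = \Phi_{\e^{-2}t}(\xi\circ\theta_{x/\e};\widetilde W)$ a.s. Since $\xi\circ\theta_{x/\e}$ has the same law as $\xi$, it suffices, writing $T = \e^{-2}t$ and letting $W$ start at $0$, to prove
\[
E_0\bigl[u_0(x+\e W_T)\,\Phi_T(\xi;W)\bigr] \,-\, \overline u(t,x)\,\mathscr Z_\infty(\xi) \,\xrightarrow{L^2(\mathbb P)}\, 0.
\]
Adding and subtracting $\overline u(t,x)\,\mathscr Z_T = \overline u(t,x)\,E_0[\Phi_T(\xi;W)]$ splits this into $E_0[(u_0(x+\e W_T)-\overline u(t,x))\Phi_T]$ plus $\overline u(t,x)(\mathscr Z_T - \mathscr Z_\infty)$. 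The second term vanishes in $L^2$ since $\mathscr Z_T \to \mathscr Z_\infty$ in $L^2$ on the $L^2$-region $\b<\b_{L^2}$.

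Using $\mathbb E[\Phi_T(\xi;W)\Phi_T(\xi;W')] = e^{\b^2 F_T}$ for independent Brownian motions $W,W'$, with $F_T := \int_0^T V(W_s - W'_s)\,ds$, the $L^2$-norm squared of the first term is
\[
J_\e = E_0^{\otimes 2}\bigl[g_\e(W)\,g_\e(W')\,e^{\b^2 F_T}\bigr], \qquad g_\e(W) := u_0(x+\e W_T) - \overline u(t,x),
\]
and the goal becomes $J_\e \to 0$. On the $L^2$-region one has $F_T \nearrow F_\infty$ a.s.\ and $E_0^{\otimes 2}[e^{\b^2 F_\infty}] = \mathbb E[\mathscr Z_\infty^2] < \infty$, furnishing the integrable envelope $4\|u_0\|_\infty^2 e^{\b^2 F_\infty}$ for the integrand.

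Fix $M$ large and decompose $g_\e = \widetilde g_{\e,M} + r_{\e,M}$, where $r_{\e,M}(W) = E[u_0(x+\e W_T)\mid\mathcal F_M^W] - \overline u(t,x)$ and $\widetilde g_{\e,M}(W) = u_0(x+\e W_T) - E[u_0(x+\e W_T)\mid\mathcal F_M^W]$. By the Markov property, conditionally on $\mathcal F_M^W$ the vector $\e W_T = \e W_M + \e(W_T - W_M)$ has $\e W_M\to 0$ a.s.\ and $\e(W_T - W_M)$ is Gaussian of variance $t-\e^2 M \to t$; continuity and boundedness of $u_0$ then give $r_{\e,M} \to 0$ a.s.\ as $\e\to 0$. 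Expanding $g_\e(W)\,g_\e(W')$ into four summands, the three containing a factor $r_{\e,M}$ or $r_{\e,M}'$ vanish by dominated convergence against the envelope above (with Cauchy-Schwarz for the mixed terms). The remaining term $E_0^{\otimes 2}[\widetilde g\,\widetilde g'\,e^{\b^2 F_T}]$ I handle by splitting $e^{\b^2 F_T} = e^{\b^2 F_M}\bigl(1 + (e^{\b^2(F_T - F_M)} - 1)\bigr)$: the $e^{\b^2 F_M}$-piece vanishes exactly, since $e^{\b^2 F_M}$ is $\mathcal F_M^W\vee\mathcal F_M^{W'}$-measurable while $\widetilde g,\widetilde g'$ have zero conditional means and are conditionally independent by $W\perp W'$; the tail is bounded by $4\|u_0\|_\infty^2\bigl(\mathbb E[\mathscr Z_T^2] - E_0^{\otimes 2}[e^{\b^2 F_M}]\bigr)$, whose $\e\to 0$ limit equals $4\|u_0\|_\infty^2(\mathbb E[\mathscr Z_\infty^2] - E_0^{\otimes 2}[e^{\b^2 F_M}])$ and tends to $0$ as $M\to\infty$ by monotone convergence. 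Since $J_\e$ does not depend on $M$, this yields $J_\e\to 0$.

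The main obstacle is that the coupling weight $e^{\b^2 F_T}$ is not $\mathcal F_M$-measurable, so one cannot directly condition on $\mathcal F_M$ and factor the two martingale differences through the Markov property. The decomposition into an $\mathcal F_M$-measurable factor $e^{\b^2 F_M}$ plus a tail, combined with the uniform bound $E_0^{\otimes 2}[e^{\b^2 F_\infty}]<\infty$ supplied by the assumption $\b<\b_{L^2}$, is precisely what reconciles the $\e\to 0$ and $M\to\infty$ limits.
\end{demo}
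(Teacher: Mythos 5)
Your argument is correct, and its skeleton coincides with the paper's: both reduce the lemma to (i) the $L^2$ convergence $\sZ_T(x/\e)\to\sZ_\infty(x/\e)$, valid for $\b<\b_{L^2}$ and uniform over starting points by translation invariance, and (ii) the vanishing of the second moment $J_\e=E_0^{\otimes 2}\big[g_\e(W^{\ssup 1})\,g_\e(W^{\ssup 2})\,\mathrm e^{\b^2 F_{\e^{-2}t}}\big]$, which is exactly the paper's display \eqref{eq:expectationAbove}. Where you genuinely diverge is in the proof that $J_\e\to 0$. The paper argues softly: the triplet $\big(F_{\e^{-2}t},\,\e W^{\ssup 1}_{\e^{-2}t},\,\e W^{\ssup 2}_{\e^{-2}t}\big)$ converges in law to a triplet of \emph{independent} variables (the overlap localizes near time $0$ by transience of $W^{\ssup 1}-W^{\ssup 2}$ in $d\ge 3$, while the rescaled endpoints become independent Gaussians at time $t$), so the expectation factorizes in the limit into $E[\mathrm e^{\b^2 F_\infty}]\cdot 0\cdot 0$, with uniform integrability supplied by the H\"older bound \eqref{eq:Holder}. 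You instead condition at a fixed time $M$, annihilate the leading term exactly through conditional centering and conditional independence of the two increments given $\mathcal F_M^{W^{\ssup 1}}\vee\mathcal F_M^{W^{\ssup 2}}$, and control the remainder by the monotone tail $E_0^{\otimes 2}[\mathrm e^{\b^2 F_\infty}]-E_0^{\otimes 2}[\mathrm e^{\b^2 F_M}]$. Both proofs exploit the same mechanism (asymptotic independence of the overlap from the endpoints); yours is more quantitative and avoids identifying the limiting law and the separate uniform-integrability step, at the price of interchanging the limits $\e\to 0$ and $M\to\infty$, which you justify correctly since $J_\e$ does not depend on $M$. Two points worth making explicit in a write-up: the positivity $V\ge 0$ is what makes $\mathrm e^{\b^2(F_T-F_M)}-1\ge 0$ and hence validates both your tail bound and the monotone convergence; and the initial reduction via the shift $\theta_{x/\e}$ is legitimate because the entire random quantity is a single measurable functional of the one noise $\dB\circ\theta_{x/\e}$, whose law equals that of $\dB$.
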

\begin{proof}Note that 
$$
E_{0}[u_0(x + \e W_{\e^{-2}t})]=E_x[u_0(W_t)]= \overline u (t,x)
$$
Then 
\begin{align} \label{eq:expectationAbove}
& \E \bigg[\bigg(E_{x/\e} \left[ u_0( \e W_{\e^{-2}t}) \Phi_{\e^{-2}t}(\xi;W)\right] - \overline u (t,x) E_{x/\e} \left[ \Phi_{\e^{-2}t}(\xi;W)\right] \bigg)^2  \bigg]
\\ \nonumber
&= E_{0}^{\otimes 2}\bigg[e^{\beta^2 \int_0^{\e^{-2}t} V\big(W^{\ssup 1}_s-W^{\ssup 2}_s\big) \dd s} \bigg(u_0\left(x+\e W^{\ssup 1}_{\e^{-2}t}\right)- \overline u(t,x)\bigg)
 \bigg(u_0\left(x+\e W^{\ssup 2}_{\e^{-2}t}\right)- \overline u(t,x)\bigg)
\bigg].
\end{align}
Furthermore, 
\[\left(\int_0^{\e^{-2}t} V(W^{\ssup 1}_s-W^{\ssup 2}_s) \dd s,\e W^{\ssup 1}_{\e^{-2}t},\e W^{\ssup 2}_{\e^{-2}t}\right)\cvlaw \left(\int_0^{\infty} V(W^{\ssup 1}_s-W^{\ssup 2}_s) \dd s,Z_t^{(1)},Z_t^{(2)}\right),\]
where the right hand side is a triplet of three independent random variables, with $Z_t^{(1)}$ and $Z_t^{(2)}$ distributed as $W_t$. Hence, expectation \eqref{eq:expectationAbove} vanishes as $\e \to 0$, provided that $u_0$ is bounded and continuous, and because of  uniform integrability which is implied by
\begin{equation} \label{eq:Holder}
E_{0}^{\otimes 2}\bigg[\mathrm{exp}\bigg\{(1+\delta)\beta^2 \int_0^{\infty} V\left(W^{\ssup 1}_s-W^{\ssup 2}_s\right) \dd s\bigg\}\bigg] < \infty,
\end{equation}
for $\b<\b_{L^2}$ and $\delta>0$ small enough. The proof is concluded by the observation that $E_{x/\e} \left[ \Phi_{\e^{-2}t}(\xi;W)\right]-{\mathfrak u}(\dB\circ\theta_{x/\e}) \stackrel {L^2} {\to} 0$.
\end{proof}
We now end the\\
\noindent{\bf{Proof of \eqref{eq:res2}: }}
 For $\b < \b_{L^2}$, for all $t,x$, as $\e \to 0$, we first show
\begin{equation} \label{eq:res1}
u_\e(t,x) -  {\mathfrak u}\big(\dB^{(\e,t,x)}\big)\,  \overline u (t,x) \stackrel{L^2}{\longrightarrow} 0 \;.
\end{equation}
Note that \eqref{eq:res1} follows directly 
from Lemma \ref{lem:cvL2} and \eqref{eq:ueg}. 
Then, since $\mathfrak u>0$, taking logarithm we deduce the convergence in probability \eqref{eq:res2}. \qed
\begin{rem} Recall that in \cite{MSZ16} it was shown that, for any smooth function $f$ with compact support, $\int_{\rd} u_\e(t,x) f(x) \, \dd x \to \int_{\rd} \overline u(t,x) f(x) \, \dd x$. 
Note that unlike the latter statement, no smoothing in space is needed in the present context. 
In fact, we can recover the spatially averaged statement from above. Indeed, fast decorrelation
in space of $\dB^{(\e,t,x)}$ as $\e \to 0$, ergodicity  and smoothness  justify the equivalence below: 
\begin{eqnarray}\nn
\int u_\e (t,x) f(x) dx &\stackrel{\eqref{eq:res1}}{=} & \int  {\mathfrak u}(\dB^{(\e,t,x)})  \overline u (t,x)  f(x) dx +o(1)
\\ \nn  &\sim & \E[ {\mathfrak u}(\dB^{(\e,t,x)}) ] 
\int   \overline u (t,x)  f(x) \dd x \\ \nn & = &\int   \overline u (t,x)  f(x) \dd x .
\end{eqnarray}
\end{rem} 
\qed

\subsection{Narrow-wedge initial condition: Proof of \eqref{eq:res3}.}
Fix $x_0 \in \rd$, and consider the solution of SHE
\begin{equation}\label{eq:SHEegNarrow}
 \frac{\partial}{\partial t} u_{\e} = \frac12 \D u_{\e} +
 \b \e^{\frac{d-2}2} u_\e \, \dB_{\e} \;,\qquad\,\, \lim_{t \searrow 0} u_{\e}(t, \cdot) = \delta_{x_0}(\cdot)\;,
\end{equation}
or, equivalently by the relation $u_\e= \exp h_\e$, the solution of KPZ
\begin{equation}\label{eq:eq:KPZeg}
 \frac{\partial}{\partial t} h_{\e} = \frac12 \D h_{\e} +  \bigg[\frac12   |\nabla h_\e |^2  - C_\e\bigg]+
 \b \e^{\frac{d-2}2}   \dB_{\e} \;,\quad\,\,   
  \lim_{t \searrow 0} \exp h_{\e}(t, \cdot) = \delta_{x_0}(\cdot)\;.
\end{equation}
By Feynman-Kac formula, the solution of SHE now admits a Brownian bridge representation:
\begin{align} 
\nonumber  &u_\e(t,x) \\ 
&= \rho(t,x-x_0) \,E_{0,\e^{-1}x_0}^{\e^{-2}t,\e^{-1}x}  \bigg[  \exp\bigg\{\beta \,\int_0^{\e^{-2}t}\!\! \int_{\rd} \, \phi(W_{
{ s}}-y)  \dB_{(\e,0)}(s, y)\dd s \dd y -  \frac{\beta^2\,t}{2\,\e^2}\,\, V(0)\bigg\}\bigg]\\ \label{eq:BrownianBridgeRepr}
&= \rho(t,x-x_0) E_{0,0}^{\e^{-2}t,\e^{-1}x}  \bigg[  \exp\bigg\{\beta \,\int_0^{\e^{-2}t}\!\! \int_{\rd} \, \phi(W_{
{ s}}-y)  \dB_{(\e,x_0)}(s, y)\dd s \dd y -  \frac{\beta^2\,t}{2\,\e^2}\,\, V(0)\bigg\}\bigg] 
\end{align} 
where $E_{0,x}^{t,y}$ denotes expectation with respect to a Brownian bridge starting at $x$ and conditioned to be found at $y$ at time $t$, $\rho$ is the $d$-dimensional Gaussian kernel and 
\begin{equation}\label{eq:xi:subscript}
\dB_{(\e,x_0)}(S,Y) = \e^{(d+2)/2}\dB(\e^2 S,x_0 + \e Y)
\end{equation}
so that we again have $\dB_{(\e,x_0)}\eqlaw \dB$. 

The following Lemma follows the approach for proving the local limit theorem as in \cite{Si95,Va04}.
\begin{lemma} \label{lem:cvL2sharp}
 For $\b \in (0, \b_{L^2})$, for any $A>0$,
\begin{equation} \label{eq:LLT}
\begin{split}
\sup_{|x|\leq A}\big\| E_{0,0}^{\e^{-2}t,\e^{-1}x}[  \Phi_{\e^{-2}t}(\xi,\cdot)]
 -  {\mathfrak u}(\dB)  
 \; {\mathfrak u}\big(\dB^{(1,\e^{-2}t,\e^{-1}x)}\big)\big\|_{L^1(\mathbb P)} \to0.
\end{split}
\end{equation}

\end{lemma}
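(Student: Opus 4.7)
My plan, modelled on the local central-limit-theorem approach for polymers due to Sinai and Vargas, is to split the Brownian bridge of length $T := \e^{-2} t$ from $0$ to $y := \e^{-1} x$ at its midpoint $T/2$, and exploit the resulting conditional independence together with the independence of the white noise $\xi$ on the disjoint time intervals $[0, T/2]$ and $[T/2, T]$.

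I first factor $\Phi_T = \Phi_{T/2}^{(-)} \Phi_{T/2}^{(+)}$, where $\Phi_{T/2}^{(-)}$ and $\Phi_{T/2}^{(+)}$ use the noise on $[0, T/2]$ and $[T/2, T]$ respectively. The Markov property of the bridge yields
$$
E_{0,0}^{T, y}[\Phi_T(\xi;\cdot)] = \int_{\rd} \frac{\rho(T/2, z)\,\rho(T/2, y - z)}{\rho(T, y)}\, E_{0, 0}^{T/2, z}\!\left[\Phi^{(-)}_{T/2}\right] E_{0, z}^{T/2, y}\!\left[\Phi^{(+)}_{T/2}\right] \dd z.
$$
I then apply a time reversal about $T$ and a spatial translation by $y$ to the second factor; by invariance of white noise and the symmetry of $\phi$, it rewrites pathwise as a bridge partition function from $0$ to $y - z$ on $[0, T/2]$ driven by the noise $\eta := \xi^{\ssup{1, T, y}}|_{[0, T/2]}$. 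Crucially, $\eta$ is determined by $\xi$ on the time slab $[T/2, T]$ and is therefore independent of $\xi|_{[0, T/2]}$, so the two bridge factors are driven by independent copies of the noise.

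The heart of the argument is the following local-limit-type estimate: for any $K > 0$,
$$
\sup_{|v| \leq K \sqrt{T}} \left\| E_{0, 0}^{T/2, v}[\Phi_{T/2}(\xi;\cdot)] - \mathfrak u(\xi) \right\|_{L^2(\IP)} \longrightarrow 0 \qquad (T \to \infty),
$$
which asserts that conditioning the polymer endpoint at the natural Brownian scale does not disturb the martingale limit $\mathfrak u$. Expanding the $L^2$ norm reduces this to controlling two-replica expectations of the shape $E_0^{\otimes 2}\bigl[\exp\{\beta^2 \int_0^{T/2} V(W^{(1)}_s - W^{(2)}_s) \dd s\}\,;\, W^{(1)}_{T/2}, W^{(2)}_{T/2} \approx v\bigr]$; for $\beta < \beta_{L^2}$ the uniform integrability from \eqref{eq:Holder} is at one's disposal, and combined with a local CLT for the replica endpoints and the $L^2$ convergence $\mathscr Z_{T/2} \to \mathfrak u(\xi)$ one obtains the estimate.

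With this in hand, I change variables $z = y/2 + w$, so that the density ratio $\rho(T/2, z)\rho(T/2, y - z)/\rho(T, y)$ becomes a centered Gaussian in $w$ of variance of order $T$; in particular the effective range of $z$ is $y/2 + O(\sqrt T)$, which is covered by the supremum above. Substituting the approximation into both bridge factors and invoking the independence of $\xi|_{[0, T/2]}$ and $\eta$, the integrand factorizes in the limit; integrating the $L^2$ error against the Gaussian density then yields $L^1$ convergence to $\mathfrak u(\xi)\, \mathfrak u\!\left(\xi^{\ssup{1, T, y}}\right)$, uniformly in $|x| \leq A$. The main obstacle is the uniform-in-endpoint local-limit estimate: one must push the two-replica control through a pinned-endpoint constraint, requiring a quantitative local CLT for the weighted polymer endpoint distribution together with the $L^2$ integrability afforded by $\beta < \beta_{L^2}$.
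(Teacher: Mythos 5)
Your overall architecture (Markov property at the midpoint, time reversal of the second half, independence of the noise on the two time slabs) matches the paper's, but the key estimate you rely on is false, and this is a genuine gap rather than a fixable detail. You claim that
$$
\sup_{|v| \leq K \sqrt{T}} \big\| E_{0, 0}^{T/2, v}[\Phi_{T/2}(\xi;\cdot)] - \mathfrak u(\xi) \big\|_{L^2(\IP)} \longrightarrow 0 .
$$
Compute the second moment: for two independent bridges both pinned at $v$ at time $T/2$, the difference $W^{\ssup 1}-W^{\ssup 2}$ is $\sqrt 2$ times a bridge from $0$ to $0$, so
$$
\E\Big[\big(E_{0,0}^{T/2,v}[\Phi_{T/2}]\big)^2\Big] = E_{0,0}^{T/2,0}\Big[\mathrm e^{\b^2\int_0^{T/2} V(\sqrt 2 W_s)\dd s}\Big] \longrightarrow \Big(E_0\big[\mathrm e^{\b^2\int_0^\infty V(\sqrt 2 W_s)\dd s}\big]\Big)^2 = \lambda^2,
$$
because the pinning forces the replicas to meet again at time $T/2$, so the overlap accumulates near \emph{both} endpoints (asymptotically independently, by transience). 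Since $\E[\mathfrak u(\xi)^2]=\lambda$ and $\lambda>1$, the pinned partition function cannot converge to $\mathfrak u(\xi)$ in $L^2$; what it actually does is factorize as $\mathfrak u(\xi)\,\mathfrak u(\xi^{\ssup{1,T/2,v}})$ plus a vanishing error — which is precisely the lemma you are trying to prove, applied to the half bridge. So your "local-limit-type estimate" is simultaneously wrong as stated and circular in its corrected form. The heuristic you borrow from the free-endpoint case (Lemma \ref{lem:cvL2}), namely that the overlap $\int_0^{T/2}V(W^{\ssup 1}_s-W^{\ssup 2}_s)\dd s$ decouples from the endpoint positions, breaks down exactly when the two endpoints are conditioned to coincide.

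The paper's proof circumvents this with a preliminary truncation step that your argument omits: before splitting the bridge, replace $\Phi_T$ by $\Phi_{m}\Phi_{T-m,T}$ with $m\to\infty$, $m=o(T)$, using transience of Brownian motion in $d\ge 3$ to show in $L^2$ that the weight collected on the middle window $[m,T-m]$ is negligible. Only then is the bridge cut at $T/2$: each factor now carries weight only on a window of length $m\ll T$ near its own terminal point of the original bridge, so the artificial pinning at $(T/2,Y)$ enters solely through the Radon--Nikodym derivative of the bridge restricted to $[0,m]$ with respect to free Brownian motion, which tends to $1$. No spurious factor is created at the splitting point. If you insist on keeping the full weights $\Phi^{(-)}_{T/2},\Phi^{(+)}_{T/2}$, you would instead have to show that the extra midpoint factors $\mathfrak u(\xi^{\ssup{1,T/2,z}})$ (mean one, decorrelating in $z$ on scale $O(1)$) average out to $1$ when integrated against the Gaussian density of width $\sqrt T$ in $z$ — an additional decorrelation argument you neither state nor prove. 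Either insert the truncation step or supply that averaging argument; as written, the proof does not go through.
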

\begin{proof}

We will write $X=\e^{-1} x$, $T=\e^{-2}t$ and let $m=m_\e$ be a time parameter, such that $m_\e\to \infty$ and $m_\e = o({T})$, as $\e \to 0$. We use the notation:
\[\Phi_{S,T}(\xi;W) := \exp\bigg\{\beta \,\int_S^{T} \int_{\rd} \, \phi(W_{
{ s}}-y)  \dB (s, y)\dd s \dd y -  \frac{\beta^2\,(T-S)} 2\,\, V(0)\bigg\}.\]

\paragraph{Step 1:} We first want to approximate $E_{0,0}^{T,X}[\Phi_T]$ by $E_{0,0}^{T,X}\left[\Phi_{m}\Phi_{T-m,T}\right]$ in $L^2$-norm, so we compute the difference:
$$
\E\bigg[\bigg(E_{0,0}^{T,X} [\Phi_T - \Phi_{m}\Phi_{T-m,T}]\bigg)^2\bigg] = E_{0,0}^{T,0} \left[\mathrm e^{\b^2 \int_0^T V(\sqrt 2 W_s)ds} -\mathrm e^{\b^2 \int_0^{m} V(\sqrt 2 W_s)ds}\mathrm e^{\b^2 \int_{T-m}^T V(\sqrt 2 W_s)ds}\right].
$$
To show that the right hand side which goes to $0$ as $\e \to 0$, it suffices to observe that, for all $a>0$,
\[\lim_{\e\to 0} E_{0,0}^{T,0}\left[\mathrm e^{\b^2\int_0^{T} V(\sqrt 2  W_s)\dd s}\,\,\, \mathbf 1\bigg\{\int_m^{T-m} V(\sqrt 2 W_s)\,\dd s >a\bigg\}
 \right] = 0.\]
To prove this, we use H\"older's inequality similarly to \eqref{eq:Holder}, and {\textcolor{black}{apply \cite[Lemma 3.5]{CCM18} (alternatively \cite[Corollary 3.8]{Va04})}}  and transience of Brownian motion for $d\geq 3$, which implies, since $m\to\infty$, that
\[\lim_{\e\to 0} E_{0,0}^{T,0}\left[\mathrm \mathbf 1\bigg\{\int_m^{T-m} V(\sqrt 2 W_s)\,\dd s >a\bigg\}
 \right] = 0.\]
 
\paragraph{Step 2:} 
We wish to use Markov property and symmetry of the Brownian bridge to show that  $E_{0,0}^{T,X}\left[\Phi_{m}\Phi_{T-m,T}\right]$ factorizes asymptotically into the product $E_0\left[\Phi_{m} \right] E_0[\Phi_{m}(\dB^{(1,T,X)})]$, which satisfies:
\[
\sup_{x\in\mathbb{R}} \left\Vert E_0\left[\Phi_{m} \right] E_0[\Phi_{m}(\dB^{(1,T,X)})] - {\mathfrak u}(\dB)  
 \; {\mathfrak u}(\dB^{(1,\e^{-2}t,\e^{-1}x)}) \right\Vert_1 \to 0,
\]
as $\e\to 0$ for $\b<\b_{L^2}$, by Cauchy-Schwarz inequality and invariance in law of the white noise with a shift by $X$. Hence, we compute
\begin{align}
E_{0,0}^{T,X}\left[\Phi_{m}\Phi_{T-m,T}\right] & = \int_{\mathbb{R}^d} \frac{\rho_{T/2}(Y)\rho_{T/2}(X-Y)}{\rho_{T}(X)} E_{0,0}^{T/2,Y} \left[\Phi_{m}\right] E_{T/2,Y}^{T,X}  \left[\Phi_{T-m,T}\right] \dd Y.
\end{align}
After change of variable by setting $Y=\sqrt{T}y$ in the above integral and since $\rho(Ts,\sqrt{T}z) = T^{-d/2} \rho(s,z)$, observe that by Jensen's inequality and dominated convergence, we can prove that
\[
\sup_{|x|\leq A} \left\Vert E_{0,0}^{T,X}\left[\Phi_{m}\Phi_{T-m,T}\right] - E_0\left[\Phi_{m} \right] E_0[\Phi_{m}(\dB^{(1,T,X)})] \right\Vert_1 \to 0,\] 
if we can show that for all fixed $y\in\mathbb{R}^d$,
\[
\sup_{|x|\leq A} \left\Vert E_{0,0}^{T/2,\sqrt T (y-x)} \left[\Phi_{m}\right] - E_0\left[\Phi_{m} \right]\right\Vert_1 \to 0.\] 
 
To prove this, we use the density of the Brownian bridge, at truncated time horizon, with respect to the Brownian motion (\cite[Lemma 3.4]{CCM18}), to get that: 
\[E_{0,0}^{T/2,\sqrt T (y-x)} \left[\Phi_{m}\right] - E_0\left[\Phi_{m} \right] = E_0\left[\Phi_{m} \left(\frac{\rho_{T/2-m}\big(\sqrt T (y-x) - W_m\big)}{\rho_{T/2}\big(\sqrt T (y-x)\big)}-1\right)\right].\]
After rescaling, the difference inside the parenthesis goes almost surely to $0$, for $y$ fixed and uniformly in $|x|\leq A$; we conclude the proof of the lemma using H\"older's inequality.

\end{proof}

We can now conclude the 

\noindent{\bf{Proof of \eqref{eq:res3}:}} 
 Let $h_\e$ be the narrow-wedge height function solution of of \eqref{eq:eq:KPZeg}. We need to show that for $\b < \b_{L^2}$, for all $t,x$,  as $\e \to 0$, 
$$
h_\e(t,x) - {\mathfrak h}(  \dB_{(\e, x_0)}  ) - {\mathfrak h}(\dB^{(\e,t,x)})   - \log \rho (t,x-x_0) \stackrel{\IP}{\longrightarrow} 0,
$$
with $\dB_{(\e, x_0)}$ in \eqref{eq:xi:subscript}. 
We use the representation \eqref{eq:BrownianBridgeRepr} and the property $\dB_{(\e,x_0)} \eqlaw \dB$, so that we can exchange $\dB$ with $\dB_{(\e,x_0)}$, in convergence \eqref{eq:LLT} taken with endpoint $\e^{-1}(x-x_0)$. This leads to the above convergence in probability for the logarithm, proving \eqref{eq:res3}.
\qed

\section{Proof of Theorem \ref{th:negmom}.}
We focus on showing that for $\b<\b_{L^2}$, $\log \sZ_\infty$ admits a sub-Gaussian lower tails estimate, that is, for some $C\in (0,\infty)$ and any $\theta>0$, 
\begin{equation}\label{moment:claim1}
\mathbb P[\log \sZ_\infty \leq -\theta] \leq C\mathrm e^{-\theta^2/C}.
\end{equation}


We invoke a second moment method combined with the Talagrand's concentration inequality as in \cite{CH02} (see also \cite[Section 2.2]{M14}).  As the Cauchy-Schwarz inequality, which is a central tool in this proof is not directly available in the continuous setting, we choose to introduce a discretization of the white noise to recover it.

Consider $\mathcal R_n$ a tiling of $[0,2^n]\times[-2^n,2^n]^d$, composed of cubes of length $2^{-n}$, such that every cube of $\mathcal R_{n}$ can be divided in $2^{d+1}$ cubes of $\mathcal R_{n+1}$. We define a discrete version of $\sZ_T$ through:
\begin{equation*}\sZ_T^{(n)} = \DE \left[\exp\left\{ \b \int_0^T \int_{\mathbb{R}^d} \phi^{(n)}_W(s,y)\, \xi(s,y) \rmd s \rmd y - \frac{\b^2}{2} \left\Vert \phi_W^{(n)} \right\Vert_{L^2([0,T]\times \mathbb{R}^d)}^2 \right\}\right],
\end{equation*}
where,
\begin{itemize}
\item $\phi_W^{}(s,y) := \phi(W_s-y)$,
\item $\phi_W^{\ssup n}(s,y) = \inf_{R} \phi_W^{}$ if $(s,y)$ is in a cube $R$ of $\mathcal{R}_n$, and $0$ otherwise.
\end{itemize}
We stress that $\phi_W^{\ssup n}$ is non-decreasing with $n$ and converges almost surely to $\phi_W$.

Using the Gaussian covariance structure, we have that
\begin{equation*}
\begin{aligned}
&\E \left[\left(\sZ_T - \sZ_T^{(n)}\right)^2\right]  = \DE^{\otimes 2}\bigg[\mathrm e^{\frac{\b^2}{2} \int_{[0,T]\times \mathbb{R}^d} \phi_{W^{\ssup 1}} \phi_{W^{\ssup 2}} (s,y) \rmd s \rmd y}\bigg]\\ &\qquad-2\DE^{\otimes 2}\bigg[\mathrm e^{\frac{\b^2}{2} \int_{[0,T]\times \mathbb{R}^d} \phi^{(n)}_{W^{\ssup 1}} \phi^{}_{W^{\ssup 2}} (s,y) \rmd s \rmd y}\bigg] 
 + \DE^{\otimes 2}\bigg[\mathrm e^{\frac{\b^2}{2} \int_{[0,T]\times \mathbb{R}^d} \phi^{(n)}_{W^{\ssup 1}} \phi^{(n)}_{W^{\ssup 2}} (s,y) \rmd s \rmd y}\bigg].
\end{aligned}
\end{equation*}
For $\b < \b_{L^2}$ and since $\phi_W^{\ssup n}\leq \phi_W$, we immediatly obtain from the monotone convergence theorem that the right-hand side goes to zero in the limit $T\to \infty$ followed by $n\to\infty$. By Doob's $L^2$ inequality 
applied to the martingale $\sZ-\sZ^{\ssup n}$, this implies in particular that
\begin{equation} \label{eq:L2approxResult}
\lim_{n\to\infty}\E \left[\left(\sZ^{}_\infty - \sZ_\infty^{(n)}\right)^2\right] = 0.
\end{equation}

Since $\phi_W^{\ssup n}$ is set to be $0$ outside of $[0,2^n]\times[-2^n,2^n]^d$, the set $\mathcal{C}_n$ containing the centers of the cubes of $\mathcal{R}_n$ is finite. Hence, as each cube has volume $2^{-(d+1)n}$, we can write that
\begin{equation} \label{eq:discreteExpression}
\int_0^\infty \int_{\mathbb{R}^d} \phi^{\ssup n}_W(s,y)\, \xi(s,y) \rmd s \rmd y = \frac{1}{2^{\frac{(d+1)n}{2}}}\sum_{(i,x)\in \mathcal{C}_n} \phi_W^{\ssup n}(i,x) \, \xi_n(i,x),
\end{equation} where the $\xi_n(i,x)$ are independent centered Gaussian random variables of variance $1$. Then, we define the polymer measure $\widehat P_{\b,\xi_n}$ of renormalized partition function $\sZ_\infty^{\ssup n}$: 
\[ \widehat P_{\b,\xi_n}(\rmd W) =  \frac{1}{\sZ_\infty^{\ssup n}} \, \Phi^{\ssup n}(W)  \DP(\rmd W),\]
where we have set, using \eqref{eq:discreteExpression},
\[\Phi^{\ssup n}(W) = \exp\left\{ \frac{\b}{2^{\frac{(d+1)n}{2}}}\sum_{(i,x)\in \mathcal{C}_n} \phi_W^{\ssup n}(i,x) \, \xi_n(i,x) - \frac{\b^2}{2} \left\Vert \phi_W^{\ssup n} \right\Vert_{L^2([0,\infty]\times \mathbb{R}^d)}^2 \right\}.\]
Finally, we let $\widehat E_{\beta,\xi_n}$ denote expectation corresponding to $\widehat P_{\beta,\xi_n}$. 

Now, we can compare the free energies of two realizations of the noise $\xi_n(i,x)$ and $\xi'_n(i,x)$:
\begin{align*}
&\log \sZ_\infty^{\ssup n}(\xi_n) - \log \sZ_\infty^{\ssup n}(\xi'_n) \\ 
& = \log \widehat E_{\b,\xi'_n} \left[\exp\left\{\frac{\b}{2^{\frac{(d+1)n}{2}}}\sum_{(i,x)\in \mathcal{C}_n} \phi_W^{\ssup n}(i,x) \, \left(\xi_n(i,x)-\xi'_n(i,x)\right) \right\}  \right]\\
& \geq \b \sum_{(i,x)\in \mathcal{C}_n} {2^{-\frac{(d+1)n}{2}}}\, \widehat \DE_{\b,\xi'_n} \left[ \phi_W^{\ssup n}(i,x) \right] \left(\xi_n(i,x)-\xi'_n(i,x)\right)\nonumber\\
& \geq - \b \sqrt{ \widehat E_{\b,\xi'_n}^{\otimes 2} \left[ \int_0^\infty \int_{\mathbb{R}^d} \phi_{W^{\ssup 1}}^{(n)}\phi_{W^{\ssup 2}}^{(n)}(s,y)\rmd s \rmd y \right]} \, \rmd (\xi_n,\xi'_n),
\end{align*}
where $\rmd(\cdot,\cdot)$ denotes the euclidean distance on $\mathbb{R}^{\mathrm{Card}(\mathcal C_n)}$, and where we used Jensen's and Cauchy-Schwarz inequalities for respectively the first and second lower bounds.

Let $m$ and $C$ be two positive constants, and consider the set:
\begin{equation*} 
\mathcal A_n=\left\{\xi_n : \sZ_\infty^{(n)}(\xi_n)\geq m, \,\, \widehat E_{\b,\xi_n}^{\otimes 2} \left[ \int_0^\infty \int_{\mathbb{R}^d} \phi_{W^{\ssup 1}}^{(n)}\phi_{W^{\ssup 2}}^{(n)}(s,y)\rmd s \rmd y \right] \leq C^2\right\}.
\end{equation*}
For $\xi'_n \in \mathcal A_n$, the above computation implies that
\begin{equation}\label{eq:lbound_on_apprFreeEnergy}
\log \sZ_\infty^{\ssup n}(\xi_n) \geq \log m - \b C \rmd\left(\xi_n,\mathcal A_n\right),
\end{equation}
therefore, assuming
\begin{equation} \label{lbProbA}
\liminf_{n\to\infty} \IP(\mathcal A_n)> 0,
\end{equation}
property \eqref{moment:claim1} results from \eqref{eq:lbound_on_apprFreeEnergy}, \eqref{eq:L2approxResult} and the following Gaussian concentration inequality (Lemma 4.1 in \cite{CH02}, extracted from \cite{Ta00}):
\begin{equation} \label{eq:distanceConcentration}
\IP\left(\rmd (\xi_n,\mathcal A_n) > u + \sqrt{2\log\frac 1 {\IP(\mathcal A_n)}}\right) \leq e^{-\frac{u^2}{2}}.
\end{equation}
We now prove \eqref{lbProbA}. By convergence \eqref{eq:L2approxResult}, and since ${\mathscr Z}_\8>0$ a.s., we can find $m>0$, such that for $n$ large enough,
\begin{equation*}
\IP\left(\sZ_\infty^{(n)} > m\right) \geq \frac 1 2.
\end{equation*}
Then, for a large enough $C$,
\begin{align*}
&\IP(\mathcal A_n) 
\geq \IP\left(\sZ_\infty^{(n)}\geq m, \,\, \DE^{\otimes 2}\left[\int_0^\infty \int_{\mathbb{R}^d} \phi_{W^{(1)}}^{(n)}\phi_{W^{(2)}}^{(n)}(s,y)\rmd s \rmd y \  e^{\beta \Phi^{(n)}(W^{\ssup 1})+\beta\Phi^{(n)}(W^{\ssup 2})}\right] \leq mC^2 \right) \nonumber \\
&\geq \IP\left(\sZ_\infty^{(n)}\geq m\right) -\IP\left(\DE^{\otimes 2}\left[\int_0^\infty \int_{\mathbb{R}^d} \phi_{W^{(1)}}^{(n)}\phi_{W^{(2)}}^{(n)}(s,y)\rmd s \rmd y \  e^{\beta \Phi^{(n)}(W^{\ssup 1})+\beta\Phi^{(n)}(W^{\ssup 2})}\right] > mC^2 \right) \nonumber \\
&\geq \frac 1 2 - \frac 1 {mC^2} \DE^{\otimes 2}\bigg[\int_0^\infty\, V(W^{\ssup 1}_s- W^{\ssup 2}_s)\dd s \  \mathrm e^{\beta^2\int_0^\infty V(W^{\ssup 1}_s-W^{\ssup 2}_s)\dd s}\bigg] >0.
\end{align*}
In the above display, the first lower bound follows from the definition of $\mathcal A$, while we used $\mathbb P[A\cap B] \geq \mathbb P[A]-\mathbb P[B^c]$ in the second lower bound. The third inequality comes from Markov's inequality and the upper-bound $\phi_W^{(n)}\leq \phi_W$. Positivity of the left hand-side of the third line is assured for $C$ large enough, provided that $\b<\b_{L^2}$.

This entails that the KPZ limit $|\mathfrak h|$ (recall \eqref{eq:frak-u}) has all positive and negative moments for all $\b<\b_{L^2}$.
Indeed, letting $\log_- = \log \wedge \, 0$, the sub-Gaussian decay of the left tail of $\log \sZ_\infty$ \eqref{moment:claim1} gives $\E[\exp\{\nu\log_- \sZ_{\infty}\}] <\infty $, for all $\nu \in \mathbb{R}$. Moreover, by definition of the $L^2$ region, we have $\E[\exp\{2\log \sZ_{\infty}\}] <\infty $. Hence, $\log \sZ_\infty$ admits all positive and negative moments.
\qed 

 \begin{rem}
 After finishing the writing of the present article, we learned that another proof of the negative moments of $\mathscr Z_\infty$ has been recently proposed in \cite{HL18,DGRZ18}
using a continuous approximation of the white noise.  A proof of the corresponding result for the KPZ equation in dimension 2, which relies on the convexity of the free energy and the Malliavin derivative, can be found in \cite{CSZ18}.
\end{rem}

\appendix

\section{Appendix.}\label{sec-appendix} 

We will include some elementary facts regarding the regularity properties of space-time white noise $\xi$. For any $z,z^\prime\in \R\times \rd$, we will denote by $\|\cdot\|$ the {\it{parabolic distance}} given by 
$\|z-z^\prime\|= |t-t^\prime|^{1/2}+ \sum_{i=1}^d |x_j-x_j^\prime|$, where $z=(t,x)$ and $z^\prime=(t^\prime,x^\prime)$. Recall that the H\"older space of positive exponent $\alpha\in (0,1)$ consists of all functions $u: \R\times \rd \to \R$ such that for any compact set $K\subset \R\times \rd$, 
$$
{{\sup_{z,z^\prime\in K, z \neq z^\prime} \frac {|u(z)- u(z^\prime)|}{\|z-z^\prime\|^\alpha}}} < \infty.
$$
The corresponding H\"older (Besov) space of {\it{negative regularity}} is defined as follows. First for any $k\in \mathbb N$, let $B_k$ denote the space of all smooth functions $\varphi\colon \R\times \rd \to \R$ which are supported on the unit ball in $(\R\times \rd, \|\cdot\|)$ such that 
$$
\|\varphi \|_{B_k}\stackrel{\mathrm{(def)}}{=} \sup_{\beta: |\beta| \leq k} \, \sup_{z\in \R\times \rd} |D^\beta \varphi(z)| \leq 1.
$$
Then for any fixed $\alpha<0$, we define the space $\mathscr C^\alpha$ to be the space of all tempered distributions $\eta \in \mathcal S^\prime(\R\times \rd)$ such that for any compact set $K\subset \R\times \rd$, 
$$
\|\eta\|_{\mathscr C^\alpha(K)}\stackrel{\mathrm{(def)}}{=} \sup_{z\in K} \sup_{\heap{u\in B_k}{\lambda\in (0,1]}} \bigg| \frac{\big\langle \eta, \Theta^\lambda_z u\big\rangle}{\lambda^\alpha}\bigg| <\infty,
$$
where $k=\lceil-\alpha\rceil$ and 
$$
(\Theta^\lambda_z u)(s,y)= \lambda^{-(d+2)} u\big(\lambda^{-2}(t-s), \lambda^{-1}(y-x)\big) \qquad z=(t,x).
$$
A crucial estimate on the Besov norm $\|\cdot\|_{\mathscr C^\alpha(K)}$ is given by 
\begin{equation}\label{est:Besov}
\|\eta\|_{\mathscr C^\alpha(K)} \leq C \sup_{n\geq 0} \sup_{z\in (2^{-2n} \Z \times 2^{-n}\Z^d)\cap \widetilde K} 2^{-n\alpha}\big\langle\eta, \Theta^{2^{-n}}_z u\big\rangle,
\end{equation}
where $\widetilde K$ is also a compact set slightly larger than $K$ and $u$ is a single, well-chosen test function (which can be constructed by wavelets, see \cite{H14}). Recall that if $\xi$ is space-time white noise (i.e, $\E[\langle \xi, \varphi_1\rangle \langle \xi, \varphi_2\rangle]= \int \varphi_1(t,x) \varphi_2(t,x) \, \dd t \dd x$), then with 
\begin{equation}\label{est:xi}
\langle \xi_\lambda, \varphi\rangle = \langle \xi, \Theta^\lambda_0 \varphi\rangle\qquad\mbox{ we have }\qquad \E[\langle \xi_\lambda, \varphi\rangle^2]=\lambda^{-(d+2)} \int_{\R^{d+1}} \varphi^2(t,x) \, \dd t \dd x.
\end{equation} 
The following result, which is a consequence of Kolmogorov's lemma and \eqref{est:Besov}, then implies the desired regularity property of $\xi$.
\begin{lemma}\label{lemma:appendix}
Fix $\alpha<0$ and $p\geq 1$ and let $\eta$ be a linear map from $\mathcal S(\R\times \rd)$ to the space of random variables. Suppose there exists $C\in (0,\infty)$ such that for all $z\in \R\times \rd$ and all $u\in \mathcal S(\R\times \rd)$ with compact support in $\R\times \rd$ with $\sup_w |u(w)| \leq 1$ one has 
$$
\E[|\eta(\Theta^\lambda_z u)|^p] \leq C \lambda^{\alpha p} \qquad \forall \lambda\in (0,1].
$$
Then there exists a random distribution $\widetilde\eta$ in $\mathcal S(\R\times \rd)$ such that for all $\alpha^\prime < \alpha- \frac {(d+2)}p$ and compact set $K$,
$$
\E\big[\|\widetilde \eta \|^p_{\mathscr C^{\alpha^\prime}(K)}\big] <\infty \qquad\mbox{and}\, \, \eta(u)=\widetilde \eta(u) \quad\mbox{a.s.}
$$
\end{lemma}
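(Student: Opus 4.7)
The approach is a Kolmogorov-type argument for distribution-valued random fields, using the Besov estimate \eqref{est:Besov} to reduce the $\mathscr C^{\alpha'}$-norm to a countable dyadic supremum involving only the fixed wavelet-type test function $u$.

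First I would fix a compact $K \subset \R \times \rd$ and let $\widetilde K$ be the slightly enlarged compact appearing in \eqref{est:Besov}. At dyadic scale $n \geq 0$ the lattice $(2^{-2n}\Z \times 2^{-n}\Z^d) \cap \widetilde K$ has cardinality $O_K(2^{n(d+2)})$, and the hypothesis applied with $\lambda = 2^{-n}$ gives $\E\big[|\eta(\Theta^{2^{-n}}_z u)|^p\big] \leq C\, 2^{-n\alpha p}$. The quantity extracted from \eqref{est:Besov} that controls the Besov semi-norm is $\sup_{n,z} 2^{n\alpha'}\,|\eta(\Theta^{2^{-n}}_z u)|$ (since $(2^{-n})^{-\alpha'} = 2^{n\alpha'}$), and dominating the supremum by the $L^p$ sum yields
\[
\E\bigg[\sup_{n,z}\Big(2^{n\alpha'}\,\big|\eta(\Theta^{2^{-n}}_z u)\big|\Big)^{p}\bigg] \;\leq\; \sum_{n\geq 0}\sum_{z} 2^{np\alpha'}\,C\,2^{-np\alpha} \;\leq\; C' \sum_{n\geq 0} 2^{\,n\,[\,(d+2)\,+\,p(\alpha'-\alpha)\,]}.
\]
The hypothesis $\alpha' < \alpha - (d+2)/p$ is exactly what makes the exponent in each summand strictly negative, so the geometric series converges and the dyadic supremum lies in $L^p$.

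Next, I would upgrade $\eta$ from a random functional on $\mathcal S(\R\times \rd)$ to a genuine random element $\widetilde \eta$ of $\mathcal S'(\R\times \rd)$. Exhaust $\R\times \rd$ by a countable family of compacts $K_j$ and denote by $\Omega_\star$ the intersection of the full-measure events on which the dyadic supremum above is finite for each $j$. On $\Omega_\star$, any $\varphi \in \mathcal S$ with support in some $K_j$ is expanded in the wavelet basis of \cite{H14} built from $u$, and the bound just established implies that pairing this expansion term-by-term with $\eta$ produces an absolutely convergent series, which is taken as the definition of $\widetilde \eta(\varphi)$; off $\Omega_\star$ set $\widetilde \eta := 0$. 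Linearity and continuity on $\mathcal S$ are inherited from the wavelet bound, so $\widetilde \eta \in \mathcal S'(\R\times \rd)$, and inserting the dyadic sup estimate into \eqref{est:Besov} yields $\E\big[\|\widetilde \eta\|^p_{\mathscr C^{\alpha'}(K)}\big] < \infty$. The identification $\widetilde \eta(\varphi) = \eta(\varphi)$ almost surely for each fixed $\varphi \in \mathcal S$ then follows from linearity of $\eta$ together with an $L^p$ estimate, via the hypothesis, on the tail of the wavelet expansion of $\varphi$.

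The main obstacle is this second step: converting moment bounds on $\eta$ at each individual test function into a single, almost-surely defined random distribution that is simultaneously good for every $\varphi$. The clean resolution is the wavelet frame of \cite{H14}, which recasts the extension as an explicit a.s.\ convergent series whose $\mathscr C^{\alpha'}$-norm is governed by precisely the dyadic supremum handled in the first paragraph.
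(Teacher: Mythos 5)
Your proposal is correct and follows exactly the route the paper intends: the lemma is stated there as ``a consequence of Kolmogorov's lemma and \eqref{est:Besov}'' with no further detail, and your argument --- the union bound over the $O(2^{n(d+2)})$ dyadic lattice points of $\widetilde K$ at scale $n$, the geometric series converging precisely when $\alpha' < \alpha - (d+2)/p$, and the wavelet-frame reconstruction from \cite{H14} to produce the modification $\widetilde\eta$ --- is the standard proof being referenced. No gaps.
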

\qed
Since for any $p \geq 1$, $\E[\langle \xi_\lambda,\varphi\rangle^p] \leq C_p \E[\langle\xi_\lambda,\varphi\rangle^2]^{p/2}$, then Lemma \ref{lemma:appendix} and \eqref{est:xi} imply that $\xi$ has regularity 
$\mathscr C^{-\frac d2- 1- \delta}$ for any $\delta>0$.

\vspace{3mm}

\noindent{\bf Compliance with Ethical Standards:} The authors declare that they have no conflict of
 interest.

\footnotesize

\end{document}